\documentclass[10pt]{amsart}

\usepackage[english]{babel}
\usepackage{amsmath,amsfonts,amssymb,amsthm}

\usepackage[dvipsnames]{xcolor}

\usepackage{tikz}
\usetikzlibrary{shapes}
\usetikzlibrary{fit}
\usetikzlibrary{decorations.pathmorphing}
\tikzstyle{inversion}=[color=red,line width=.4mm]
\tikzstyle{coinversion}=[color=Bleu,line width=.4mm]
\tikzstyle{alors}=[line width=.4mm,->]
\tikzstyle{Classe}=[ellipse,draw=Vert!100,fill=Vert!10,thick]
\tikzstyle{ClasseB}=[ellipse,draw=Bleu!100,fill=Bleu!10,thick]
\tikzstyle{ClasseR}=[ellipse,draw=Rouge!100,fill=Rouge!10,thick]
\definecolor{pourpre}{rgb}{0.7,0,0.9}
\definecolor{vertclair}{rgb}{0.5, 1.0, 0.5}
\definecolor{vertex}{rgb}{0, 0.5, 0}
\definecolor{vertolive}{rgb}{0, 0.5, 0.0}
\definecolor{jaune}{rgb}{1.0, 1.0, 0}
\definecolor{turquoise}{rgb}{0, 1.0, 1.0}
\definecolor{orange}{rgb}{1.0, 0.6, 0}
\definecolor{bleu}{rgb}{0.25, 0.25, 0.75}
\definecolor{bleuclair}{rgb}{0.6, 0.5, 1.0}
\definecolor{rosepale}{rgb}{1.0, 0.7, 1.0}
\definecolor{brun}{cmyk}{0, 0.8, 1, 0.6}
\definecolor{noir}{RGB}{0,0,0}
\definecolor{rouge}{RGB}{205,35,38}
\definecolor{violet}{RGB}{181,18,225}
\definecolor{bleut}{RGB}{121,176,197}

\newcommand{\Cacher}[1]{}

\usepackage{cite}
\usepackage{array}
\usepackage{multicol}
\usepackage{xspace}
\usepackage{subfigure}
\title[Left  trapezoids]{Inversions and the Gog-Magog problem}
\author{Philippe Biane \qquad Hayat Cheballah}
\email{biane@univ-mlv.fr, cheballa@univ-mlv.fr}
\date{\today}
\address{Institut Gaspard-Monge, universit\'e Paris-Est Marne-la-Vall\'ee,
5 Boulevard Descartes, Champs-sur-Marne, 77454, Marne-la-Vall\'ee cedex 2,
France}
\keywords{Gog and Magog triangles, Sch\"utzenberger Involution, alternating sign matrices, totally symmetric self complementary 
plane partitions}
\thanks{A shorter version of this paper will appear in the proceedings of the 2013 FPSAC Conference}
\textwidth=145mm
\oddsidemargin=10mm
\evensidemargin=10mm

\newtheorem{theorem}{Theorem}[section]
\newtheorem{prop}[theorem]{Proposition}
\newtheorem{lemma}[theorem]{Lemma}
\newtheorem{definition}[theorem]{Definition}

\newtheorem{Remarque}[theorem]{Remark}

\newtheorem{conj}[theorem]{Conjecture}

\numberwithin{equation}{section}

\renewcommand{\leq}{\leqslant}
\renewcommand{\geq}{\geqslant}



\definecolor{Noir}{RGB}{0,0,0}
\definecolor{Blanc}{RGB}{255,255,255}
\definecolor{Rouge}{RGB}{205,35,38}
\definecolor{Bleu}{RGB}{2,60,195}
\definecolor{Vert}{RGB}{23,163,1}
\definecolor{Violet}{RGB}{181,18,225}
\definecolor{Orange}{RGB}{255,113,15}
\definecolor{vertex}{rgb}{0, 0.5, 0}

\begin{document}

\begin{abstract}
We consider the  problem of finding a bijection between the sets of alternating sign matrices and of totally symmetric self complementary plane partitions, which can be reformulated using Gog and  Magog triangles. In a previous work we introduced GOGAm triangles, which are images of Magog triangles by the Sch\"utzenberger involution. In this paper
we introduce  left Gog and GOGAm trapezoids. We conjecture that they are equienumerated,  and 
 we give an explicit
 bijection between such trapezoids with one or two diagonals. We also study the distribution of inversions and coinversions in Gog triangles.
\end{abstract}

\maketitle

\section{Introduction} 


It is  well a known open problem in  combinatorics  to find a bijection between the sets of alternating sign matrices on one side, and totally symmetric 
self complementary plane partitions on the other side. One can reformulate the problem using so-called Gog and Magog triangles, which are particular species of Gelfand-Tsetlin triangles. In particular, Gog triangles are in simple bijection with alternating sign matrices of the same
size, while Magog triangles are in bijection with totally symmetric 
self complementary plane partitions. In \cite{mrr},  Mills, Robbins and Rumsey introduced trapezoids in this problem by cutting out $k$ diagonals on the right (with the conventions used in the present paper) of a triangle of size $n$, and they conjectured that Gog and Magog trapezoids of the same size are equienumerated. Some related conjectures can be found in \cite{kratt}.
Zeilberger
\cite{zeilberger} proved the conjecture of Mills, Robbins and Rumsey, but no explicit bijection is known, except for $k=1$ (which is a relatively easy problem) and for $k=2$, this bijection being the main result of \cite{BC}. In this last paper a new class of triangles and (right) trapezoids was introduced, called GOGAm triangles (or trapezoids), which are in bijection with the Magog triangles by the Sch\"utzenberger involution acting on Gelfand-Tsetlin triangles. 

In this paper we introduce a new class of trapezoids by cutting diagonals of Gog and GOGAm triangles on the left instead of the right. We conjecture that the left Gog and GOGAm trapezoids of the same shape are equienumerated, and give a bijective proof of this for  trapezoids composed of one or two diagonals.  It turns out that the bijection we obtain for left trapezoids is much simpler than the one of \cite{BC} for right trapezoids.
We also consider other shapes, such as rectangles or pentagons, for which we also conjecture that  Gog and GOGAm are equienumerated.

Inversions of Gog triangles play a fundamental role in our contruction. For this reason, we study the distribution of inversions and also of coinversions (which are inversions of the vertically symmetrized triangle) among Gog triangles. In particular we determine the pairs $(p,q)$ for which there exists Gog triangles of a given size with $p$ inversions and $q$ coinversions.

The paper is organized as follows.  

In section 2 we give some elementary definitions about  Totally Symmetric Self-Complementary Plane Partitions, Alternating Sign Matrices, Gelfand-Tsetlin triangles. In section 3, 4 and 5 we consider Gog, Magog and GOGAm triangles and trapezoids, and pentagons in section 6.
Section 7 is devoted to the formulation of  conjectures on the existence of some bijections. We consider some statistics in section 8. In section 9 we give a bijection between Gog and GOGAm trapezoids with two diagonals, and in section 10 a bijection between pentagons of shape $(n,3,3,3)$. Finally, the last section is devoted to the study of the distribution of the number of inversions and coinversions in Gog triangles.

\section{Basic definitions}

\subsection{ Totally Symmetric Self-Complementary Plane Partitions and Alternating Sign Matrices }
A \emph{Plane Partition} is a stack of cubes in a corner,
$$\includegraphics[width=3cm]{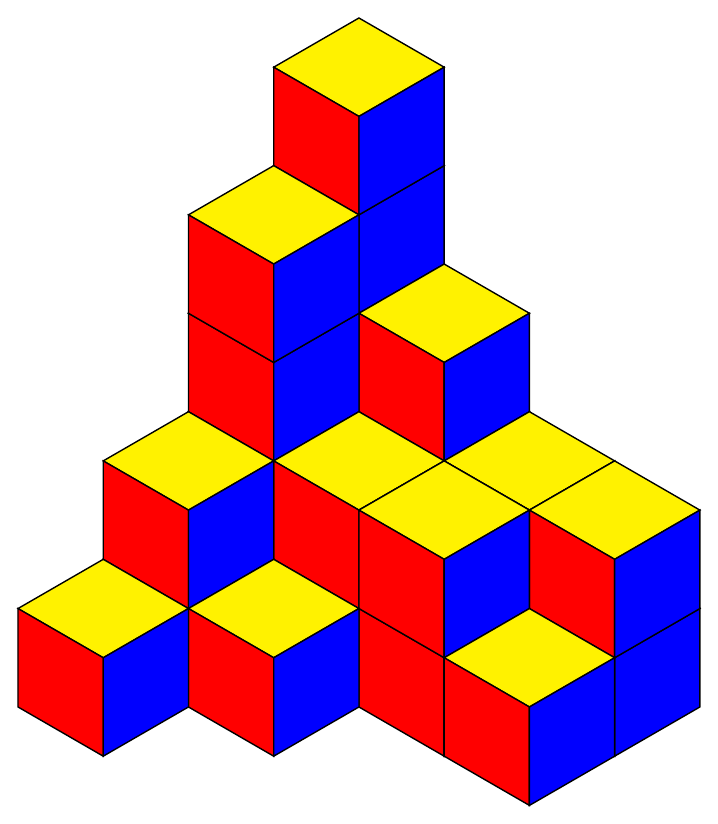}$$
Choosing a large cube that contains a plane partition, one can also encode it as a lozenge tiling of an hexagon

A \emph{Totally Symmetric Self-Complementary Plane Partition} (TSSCPP) of size $n$ is a plane partition, inside a cube of side $2n$, such that the lozenge tiling has all the symmetries of the hexagon, as in the  picture below, where $n=3$.
$$
\begin{tikzpicture}[scale=.35] 

\tikzstyle{ajuda}=[gray,very thin]
\tikzstyle{Ajuda}=[gray!30,very thin]
\tikzstyle{nilp}=[blue,thick]
\tikzstyle{nilP}=[red,thick,dotted]
\tikzstyle{Nilp}=[magenta,inner sep=0pt,minimum size=2.5pt,fill,shape=circle]
\tikzstyle{NilP}=[red,inner sep=0pt,minimum size=2pt,fill,shape=circle]
\tikzstyle{nclp}=[fill,shape=circle,inner sep=0pt,minimum size=.3mm]
\tikzstyle{ASM}=[black]
\tikzstyle{ASMleve}=[black!20]
\tikzstyle{FPL}=[very thick,blue,rounded corners=3pt]
\tikzstyle{NILP}=[very thick,red,rounded corners=1pt]
\tikzstyle{fpl}=[thick,blue,rounded corners=2pt]
\tikzstyle{tl}=[thick,red,rounded corners=6pt]
\tikzstyle{RS}=[thick,black,rounded corners=3pt]
\tikzstyle{TL}=[thick,blue,rounded corners=6pt]
\tikzstyle{boucle}=[very thick,blue]
\tikzstyle{boucle out}=[very thick,red,dashed]
\tikzstyle{up}=[thin,blue,->]
\tikzstyle{down}=[thin,blue,<-]
\tikzstyle{boucl}=[thick,blue]
\tikzstyle{Boucle}=[red]
\tikzstyle{FPLeve}=[very thick,blue!20,rounded corners=3pt]
\tikzstyle{seta}=[thick,red]
\tikzstyle{zface}=[fill=yellow,draw=black,very thin]
\tikzstyle{xface}=[fill=pourpre!50,draw=white,very thin]
\tikzstyle{yface}=[fill=vertclair!40,draw=white,very thin]
 \tikzstyle{zFace}=[fill=blue!80,draw=black,very thin]
 \tikzstyle{xFace}=[fill=red!80,draw=black,very thin]
 \tikzstyle{yFace}=[fill=yellow!80,draw=black,very thin]
 
 \def \quadz{--+(1,0,0)--+(1,1,0)--+(0,1,0)--cycle};
 \def \quady{--+(1,0,0)--+(1,0,1)--+(0,0,1)--cycle};
 \def \quadx{--+(0,1,0)--+(0,1,1)--+(0,0,1)--cycle};
 \foreach \x/\y/\z in 
{0/0/6,1/0/6,2/0/6,3/0/5,4/0/5,5/0/3,
0/1/6,1/1/5,2/1/5,3/1/4,4/1/3,5/1/1,
0/2/6,1/2/5,2/2/4,3/2/3,4/2/2,5/2/1,
0/3/5,1/3/4,2/3/3,3/3/2/,4/3/1,5/3/0,
0/4/5,1/4/3,2/4/2,3/4/1,4/4/1,5/4/0,
0/5/3,1/5/1,2/5/1,3/5/0,4/5/0,5/5/0
}{
  \filldraw[yFace,x={(-.866,-.5)},z={(-1,-1)}] (\y,\z,\x) \quady;
  \filldraw[xFace,x={(-.866,-.5)},z={(-1,-1)}] (\z,\y,\x) \quadx;
  \filldraw[zFace,x={(-.866,-.5)},z={(-1,-1)}] (\x,\y,\z) \quadz;}

\end{tikzpicture}
$$

An \emph{Alternating Sign Matrix} (ASM) is a square matrix with entries in
$\{-1,0,+1\}$ such that, on each line and on each column, the non zero entries alternate in sign, the sum of each line and
each column being equal to 1.

It is well known \cite{andrews} that the number of TSSCPP of size $n$ is 

\begin{align}
\label{eq.AnEnum}
A_n&=\prod_{j=0}^{n-1} \frac{(3j+1)!}{(n+j)!} =1,2,7,42,429,\ldots
\end{align}

and, 
as first proved by Zeilberger \cite{zeilberger}, this is also the number of ASM of size $n$
(more about this story is  in~\cite{bressoud}). 
However no explicit bijection between these classes of objects has ever been constructed, and finding one is a major open problem in 
combinatorics.
In this paper we use 
Gog and Magog triangles (defined below) in order to investigate the problem.

\subsection{Gelfand-Tsetlin triangles}
\begin{definition}
A  Gelfand-Tsetlin  triangle of size $n$ is a triangular array 
$X=(X_{i,j})_{n\geqslant i\geqslant j\geqslant 1}$ of positive integers
$$\begin{matrix}
 X_{n,1}& &X_{n,2}& &\ldots& &X_{n,n-1}& &X_{n,n}\\
  &X_{n-1,1}& &X_{n-1,2}& &\ldots& &X_{n-1,n-1}&  \\
  & &\ldots& &\ldots& &\ldots& & \\
  & & &X_{2,1}& &X_{2,2}& & & \\
  & & & &X_{1,1}& & & & \\
  \end{matrix}
$$
 such that
$$X_{i+1,j}\leqslant  X_{i,j}\leqslant X_{i+1,j+1}\quad \text{for $n-1\geqslant i\geqslant j\geqslant 1$}.$$
\end{definition}

The set of all Gelfand-Tsetlin triangles of size $n$ is a poset for the order such that $X\leq Y$ iff $X_{ij}\leq Y_{ij}$ for all $i,j$.
It is also a lattice for this order, the infimum and supremum being taken entrywise: $\max(X,Y)_{ij}=\max(X_{ij},Y_{ij})$.
\subsection{Sch\"utzenberger involution}

Gelfand-Tsetlin triangles label bases of irreducible representations of general linear groups. As such, they are in simple bijection with 
semi-standard Young tableaux. It follows that the Sch\"utzenberger involution, which is defined on SSYTs, can be transported to 
Gelfand-Tsetlin triangles. The following description of this involution has been studied by Berenstein and Kirillov \cite{kirillov}.

First define involutions $s_k$, for $k\leqslant n-1$, acting on the set of Gelfand-Tsetlin triangles of size $n$.
If $X=(x_{i,j})_{n\geqslant i\geqslant j\geqslant 1}$ is such a triangle the action of $s_k$ on $X$ is given by 
$s_kX=(\tilde X_{i,j})_{n\geqslant i\geqslant j\geqslant 1}$ with
$$\begin{array}{rcl}
\tilde X_{i,j}&=&X_{i,j},\qquad \text{if\ }i\ne k\\
\tilde X_{k,j}&=&\max(X_{k+1,j},X_{k-1,j-1})+\min(X_{k+1,j+1},X_{k-1,j})-X_{i,j}
\end{array}$$
It is understood that 
$\max(a,b)=\max(b,a)=a$ and $\min(a,b)=\min(b,a)=a$ if the entry $b$ of the triangle is not defined.
The geometric meaning of the transformation of an entry is the following: on row $k$, any entry
$X_{k,j}$ is surrounded by four (or less if it is on the boundary) numbers, increasing from left to right.
$$\begin{matrix}
 X_{k+1,j}& &X_{k+1,j+1}\\
  &X_{k,j}&\\
  X_{k-1,j-1}& &X_{k-1,j} \\
 \end{matrix}
$$
 These four numbers determine a smallest interval containing
$X_{k,j}$, namely $$\left[\max(X_{k+1,j},X_{k-1,j-1}),\min(X_{k+1,j+1},X_{k-1,j})\right]$$ and the transformation maps $X_{k,j}$ to its mirror image 
with respect to the center of this interval.

Define $\omega_j=s_js_{j-1}\ldots s_2s_1$.

\begin{definition}

The Sch\"utzenberger involution, acting on Gelfand-Tsetlin triangles of size $n$, is  given by the formula
$$S=\omega_1\omega_2\ldots\omega_{n-1}$$
\end{definition}
It is a non trivial result that $S$ is an involution \cite{kirillov}, 
and coincides with the Sch\"utzenberger involution when transported to SSYTs; 
beware that the $s_k$ {\sl do not} satisfy the braid relations.\\
\section{Gog triangles and trapezoids}
\label{sec.gog}

\begin{definition} A Gog triangle of size $n$ is a Gelfand-Tsetlin  triangle 
such that

\medskip

$(i)\qquad \qquad \qquad \qquad \qquad  X_{i,j}<X_{i,j+1}, \qquad j<i\leqslant n-1$

\medskip

in other words, such that its rows are strictly increasing, and such that

\medskip

$(ii)\qquad \qquad \qquad \qquad \qquad
X_{n,j}=j,\qquad 1\leqslant j\leqslant n$.

\medskip

\end{definition}

It is immediate to check that the set of Gog triangles of size $n$ is a sublattice of the Gelfand-Tsetlin triangles.

There is  a simple bijection between Gog triangles and Alternating sign matrices (see e.g. \cite{bressoud}). If $(M_{ij})_{1\leqslant i,j\leqslant n}$
is an ASM of size $n$,
then the matrix $\tilde M_{ij}= \sum_{k=i}^nM_{ij}$ has exactly $i-1$ entries 0 and $n-i+1$ entries 1 on row $i$. Let
$(X_{ij})_{j=1,\ldots,i}$ be the columns (in increasing order) with a 1 entry of $\tilde M$ on row $n-i+1$. The triangle 
$X=(X_{ij})_{n\geqslant i\geqslant j\geqslant 1}$ is the Gog triangle corresponding to $M$. 

For example, the following matrix $M$ is an alternating sign matrix of size $5$. We also show the matrix $\bar M$, and its associated  Gog triangle 
$$
M=\begin{pmatrix}
0&1&0&0&0\\
0&0&1&0&0\\
1&-1&0&0&1\\
0&1&-1&1&0\\
0&0&1&0&0
\end{pmatrix}
\qquad\bar M=
\begin{pmatrix}
1&1&1&1&1\\
1&0&1&1&1\\
1&0&0&1&1\\
0&1&0&1&0\\
0&0&1&0&0
\end{pmatrix}
$$

$$\begin{matrix}
 1& &2& &3& &4& &5\\
  &1& &3& &4& &5& \\
  & &1& &4& &5& & \\
  & & &2& &4& & & \\
  & & & &3& & & & \\
 \end{matrix}
$$

 \subsubsection{Gog Trapezoids}

\begin{definition}
A $(n,k)$  right Gog trapezoid   (for $k\leq n$) is an array of positive integers 
$X=(X_{i,j})_{n\geqslant i\geqslant j\geqslant 1; i-j\leq k-1}$ formed from the $k$ rightmost SW-NE diagonals of some Gog triangle of size $n$.
\end{definition}
Below is a $(5,2)$ right Gog trapezoid.

\begin{center}
\begin{tikzpicture}[scale=.5]
\tikzstyle{box}=[-,line width=0.3mm,color=red]
\tikzstyle{ntrap}=[color=red]
\draw(6,4) node{$4$} ;\draw(8,4) node{$5$} ;
\draw(5,3) node{$4$} ;\draw(7,3) node{$5$} ;
\draw(4,2) node{$3$} ;\draw(6,2) node{$4$} ;
\draw(3,1) node{$1$} ;\draw(5,1) node{$3$} ;
\draw(4,0) node{$2$} ;

\end{tikzpicture}
\end{center}

\begin{definition}
A $(n,k)$ left Gog  trapezoid   (for $k\leq n$) is an array of positive integers
$X=(X_{i,j})_{n\geqslant i\geqslant j\geqslant 1; k\geqslant j}$  formed from the $k$ leftmost NW-SE diagonals of a Gog triangle of size $n$.

\end{definition}
A more direct way of checking that a left Gelfand-Tsetlin 
trapezoid is a left Gog trapezoid is to verify that its rows are strictly increasing and that its SW-NE diagonals are bounded by $1,2,\ldots,n$.

Below is a $(5,2)$ left Gog trapezoid.
\begin{center}\label{trapeze::Gog::gauche::52}
\begin{tikzpicture}[scale=.5]
\tikzstyle{box}=[-,line width=0.3mm,color=red]
\tikzstyle{ntrap}=[color=red]
\draw  (0,4) node{$1$} ;\draw  (2,4) node{$2$} ;
\draw  (1,3) node{$1$} ;\draw(3,3) node{$3$} ;
\draw  (2,2) node{$2$} ;\draw(4,2) node{$3$} ;
\draw(3,1) node{$2$} ;\draw(5,1) node{$4$} ;
\draw(4,0) node{$4$} ;

\end{tikzpicture}
\end{center}
There is a simple involution $X\to\tilde X$ on Gog triangles of size $n$, given by

\begin{equation}\label{reflection}
\tilde X_{i,j}=n+1-X_{i,i+1-j}
\end{equation}
 which exchanges left and right trapezoids of the same size.
This involution corresponds to a vertical symmetry of the associated ASMs.
\subsubsection{Minimal completion}
Since the set of Gog triangles is a lattice, given a (left or right) Gog trapezoid, there exists a smallest   Gog triangle from which it can be extracted. We call this Gog triangle the {\sl canonical completion} of the (left or right) Gog trapezoid. Their explicit value is computed in the next Proposition.
\begin{prop}$ $
\begin{enumerate}
 \item Let $X$ be a $(n,k)$ right Gog trapezoid, then its canonical completion satisfies
$$X_{ij}=j,\quad \text{for $i\geqslant j+k$}$$
\item Let $X$ be a $(n,k)$ left Gog trapezoid, then its canonical completion satisfies
$$X_{i,j}=\max(X_{i,k}+j-k,X_{i-1,k}+j-k-1,\ldots, X_{i-j+k,k})\quad\text{for $j\geqslant k$}$$
\end{enumerate}
\end{prop}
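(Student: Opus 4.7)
The plan is to prove each part by showing two inclusions: first, that the value on the right-hand side is an entrywise lower bound for any Gog triangle extending the trapezoid, and second, that the right-hand side together with the trapezoid entries itself defines a valid Gog triangle. Since the set of Gog triangles of size $n$ is a lattice (as recalled earlier), these two facts force the formula to coincide with the infimum of all admissible extensions, i.e.\ with the canonical completion.

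For part (1), the lower bound $Y_{i,j}\geq j$ holds in every Gog triangle, since iterating $Y_{i+1,j}\leq Y_{i,j}$ down to the bottom row gives $j=Y_{n,j}\leq Y_{i,j}$. The choice $X_{i,j}=j$ for $i\geq j+k$ therefore saturates this bound. Validity of the completed triangle is routine: within the completed region the vertical and diagonal inequalities are trivial since the columns are constant, and at the join $j=i-k$ the strict row inequality $X_{i,i-k}<X_{i,i-k+1}$ follows from the same lower bound applied inside the trapezoid, namely $X_{i,i-k+1}\geq i-k+1>i-k$.

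For part (2), introduce $T_m(i,j):=X_{i-m,k}+(j-k-m)$ for $0\leq m\leq j-k$. The lower bound comes from iterating the diagonal inequality $Y_{a,b}\leq Y_{a+1,b+1}$ exactly $m$ times starting at $(i-m,k)$ to obtain $X_{i-m,k}\leq Y_{i,k+m}$, and then applying strict row increase from column $k+m$ to column $j$ to deduce $Y_{i,j}\geq T_m(i,j)$; maximizing over $m$ yields the claim. For validity, set $X_{i,j}:=\max_m T_m(i,j)$ and verify the Gog axioms by means of two term-level identities: the row-shift $T_m(i,j+1)=T_m(i,j)+1$, which delivers $X_{i,j+1}\geq X_{i,j}+1$, and the diagonal-shift $T_m(i,j)=T_{m+1}(i+1,j+1)$, which embeds every term of $X_{i,j}$ inside the max defining $X_{i+1,j+1}$ and so gives $X_{i,j}\leq X_{i+1,j+1}$. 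The fixed bottom-row condition $X_{n,j}=j$ then falls out from the trapezoid chain $X_{n-m,k}\leq X_{n-m+1,k+1}\leq\cdots\leq X_{n,k+m}=k+m$, which bounds each $T_m(n,j)$ by $j=T_0(n,j)$.

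The delicate step, and the one I expect to be the main obstacle, is the remaining vertical inequality $X_{i+1,j}\leq X_{i,j}$. For $m\geq 1$ one has $T_m(i+1,j)=T_{m-1}(i,j)-1$, which sits comfortably below $X_{i,j}$. The term $T_0(i+1,j)=X_{i+1,k}+(j-k)$, however, admits no shift identity relating it to a term of $X_{i,j}$; here I must invoke the GT inequality $X_{i+1,k}\leq X_{i,k}$ present inside the trapezoid itself to conclude $T_0(i+1,j)\leq T_0(i,j)\leq X_{i,j}$. This is the unique point in the proof where the internal structure of the given trapezoid, rather than mere index manipulation, enters the argument.
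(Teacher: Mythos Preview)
Your proof is correct. The paper's own proof is extremely terse: it calls part (1) ``trivial'' and for part (2) writes only that the formula ``is easily proved by induction on $j-k$.'' Your direct verification --- establishing the right-hand side as an entrywise lower bound, then checking it defines a valid Gog triangle --- is essentially the unrolled form of that induction: the inductive step would set $X_{i,j+1}=\max(X_{i,j}+1,\,X_{i-1,j})$, and substituting the formula at level $j$ reproduces precisely your shift identities $T_m(i,j)+1=T_m(i,j+1)$ and $T_m(i-1,j)=T_{m+1}(i,j+1)$. The two arguments are thus reorganizations of one another; yours has the merit of making explicit the boundary checks (the join at column $k$, the vertical inequality via $X_{i+1,k}\leq X_{i,k}$, and the top-row condition via the diagonal bound $X_{n-m,k}\leq k+m$) that the paper leaves entirely to the reader. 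One small wording point: the chain you call a ``trapezoid chain'' for the top-row condition actually passes through entries with second index larger than $k$, hence outside the trapezoid; what you are really using is the bound $X_{n-m,k}\leq k+m$ inherited from any ambient Gog triangle, which is valid but not literally a chain of trapezoid entries.
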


\begin{proof} The first case  (right trapezoids) is trivial, the formula for the second case (left trapezoids) 
is easily proved  by induction on $j-k$.
\end{proof}

\medskip

For example, the $(5,2)$ left Gog trapezoid above has canonical completion
\begin{center}
\begin{tikzpicture}[scale=.5]
\tikzstyle{box}=[-,line width=0.3mm,color=red]
\tikzstyle{ntrap}=[color=red]
\draw  (0,4) node{$1$} ;\draw  (2,4) node{$2$} ;
\draw[ntrap](4,4) node{$3$} ;
\draw[ntrap](6,4) node{$4$} ;\draw[ntrap](8,4) node{$5$} ;
\draw  (1,3) node{$1$} ;\draw(3,3) node{$3$} ;
\draw[ntrap](5,3) node{$4$} ;\draw[ntrap](7,3) node{$5$} ;
\draw  (2,2) node{$2$} ;\draw(4,2) node{$3$} ;
\draw[ntrap](6,2) node{$4$} ;
\draw(3,1) node{$2$} ;\draw(5,1) node{$4$} ;
\draw(4,0) node{$4$} ;

\end{tikzpicture}
\end{center}

Remark that the supplementary entries of the canonical completion of a left Gog trapezoid depend only on its rightmost NW-SE diagonal. 
\medskip

 The right trapezoids defined above coincide (modulo easy reindexations) with those of Mills, Robbins, Rumsey \cite{mrr}, and Zeilberger \cite{zeilberger}. They are in obvious bijection with the ones in \cite{BC} (actually the Gog trapezoids of \cite{BC} are the canonical completions of the right Gog trapezoids defined above). 
\section{Magog triangles and trapezoids} 
\begin{definition}
A Magog triangle of size $n$ is a Gelfand-Tsetlin triangle such that 
$X_{jj}\leq j$  for all $1\leq j\leq n$.
\end{definition}

\begin{definition}
A $(n,k)$  right Magog trapezoid   (for $k\leq n$) is an array of positive integers 
$X=(X_{i,j})_{n\geqslant i\geqslant j\geqslant 1; i-j\leq k-1}$ formed from the $k$ rightmost SW-NE diagonals of some Magog triangle of size $n$.
\end{definition}
Below is a $(5,2)$ right Magog trapezoid.
\begin{center}
\begin{tikzpicture}[scale=.5]
\tikzstyle{box}=[-,line width=0.3mm,color=red]
\tikzstyle{ntrap}=[color=red]
\draw(6,4) node{$2$} ;\draw(8,4) node{$3$} ;
\draw(5,3) node{$1$} ;\draw(7,3) node{$3$} ;
\draw(4,2) node{$1$} ;\draw(6,2) node{$2$} ;
\draw(3,1) node{$1$} ;\draw(5,1) node{$2$} ;
\draw(4,0) node{$1$} ;

\end{tikzpicture}
\end{center}
\subsubsection{Minimal completion}
 Given a  right Magog trapezoid, there exists a smallest   Magog triangle from which it can be extracted. It is obtained by putting $1$'s on the triangle sitting NW of the Magog trapezoid.
For example
\begin{center}
\begin{tikzpicture}[scale=.5]
\tikzstyle{box}=[-,line width=0.3mm,color=red]
\tikzstyle{ntrap}=[color=red]
\draw[ntrap]  (0,4) node{$1$} ;\draw[ntrap]   (2,4) node{$1$} ;\draw[ntrap] (4,4) node{$1$} ;
\draw[ntrap]   (2,2) node{$1$} ;
\draw[ntrap]   (1,3) node{$1$} ;\draw[ntrap] (3,3) node{$1$} ;
\draw(5,3) node{$1$} ;\draw(7,3) node{$3$} ;

\draw(6,4) node{$2$} ;\draw(8,4) node{$3$} ;
\draw(4,2) node{$1$} ;\draw(6,2) node{$2$} ;
\draw(3,1) node{$1$} ;\draw(5,1) node{$2$} ;
\draw(4,0) node{$1$} ;

\end{tikzpicture}
\end{center}

\section{GOGAm triangles and trapezoids}

\begin{definition}
A GOGAm triangle of size $n$ is a Gelfand-Tsetlin triangle 
whose image by the Sch\"utzenberger involution is a Magog triangle (of size $n$).

\end{definition}

It is shown in \cite{BC} that GOGAm triangles are the Gelfand-Tsetlin triangles 
 $X=(X_{i,j})_{n\geqslant i\geqslant j\geqslant 1}$ 
such that 
$X_{nn}\leq n$ and, for all $1\leq k\leq n-1$,
and all $n=j_0> j_1> j_2\ldots>j_{n-k}\geq 1$, one has
\begin{equation}\label{GOGAm}
\left(\sum_{i=0}^{n-k-1}X_{j_i+i, j_i}-X_{j_{i+1}+i,j_{i+1}}\right)+X_{j_{n-k}+n-k,j_{n-k}}\leq k
\end{equation}

The problem of finding an explicit bijection between Gog and Magog triangles can therefore be reduced to that of finding an explicit bijection between Gog and GOGAm triangles. 

\begin{definition}
A $(n,k)$ right GOGAm    trapezoid   (for $k\leq n$) is an array of positive integers $X=(x_{i,j})_{n\geqslant i\geqslant j\geqslant 1; i-j\leq k-1}$
formed from the $k$ rightmost SW-NE diagonals of a GOGAm triangle of size $n$.
\end{definition}
Below is a $(5,2)$ right GOGAm trapezoid

\begin{center}
\begin{tikzpicture}[scale=.5]
\tikzstyle{box}=[-,line width=0.3mm,color=red]
\tikzstyle{ntrap}=[color=red]
\draw(6,4) node{$2$} ;\draw(8,4) node{$4$} ;
\draw(5,3) node{$2$} ;\draw(7,3) node{$4$} ;
\draw(4,2) node{$2$} ;\draw(6,2) node{$4$} ;
\draw(3,1) node{$1$} ;\draw(5,1) node{$4$} ;
\draw(4,0) node{$3$} ;

\end{tikzpicture}
\end{center}
\begin{definition}
A $(n,k)$ left GOGAm trapezoid (for $k\leq n$) is  an array of positive integers $X=(x_{i,j})_{n\geqslant i\geqslant j\geqslant 1; k\geqslant j}$
formed from the $k$ leftmost NW-SE diagonals of a GOGAm trapezoid of size $n$.
\end{definition}

A $(5,2)$ left GOGAm trapezoid

\begin{center}
\begin{tikzpicture}[scale=.5]
\tikzstyle{box}=[-,line width=0.3mm,color=red]
\tikzstyle{ntrap}=[color=red]
\draw  (0,4) node{$1$} ;\draw  (2,4) node{$1$} ;
\draw  (1,3) node{$1$} ;\draw(3,3) node{$2$} ;
\draw  (2,2) node{$1$} ;
\draw(4,2) node{$2$} ;
\draw(3,1) node{$2$} ;\draw(5,1) node{$3$} ;
\draw(4,0) node{$3$} ;

\end{tikzpicture}
\end{center}
\subsubsection{Minimal completion}
The set of GOGAm triangles is not a sublattice of the Gelfand-Tsetlin triangles, nevertheless, given a (right or left) GOGAm trapezoid, we shall see that there exists a smallest GOGAm triangle which extends it. We call canonical completion this triangle.
\begin{prop}\label{comp}$ $
\begin{enumerate}
 \item Let $X$ be a $(n,k)$ right GOGAm trapezoid, then its canonical completion is given by
$$X_{ij}=1\quad \text{for $n\geqslant i\geqslant j+k$}$$
\item Let $X$ be a $(n,k)$ left GOGAm trapezoid, then its canonical completion is given by
$$X_{i,j}=X_{i-j+k,k}\quad \text{for $n\geqslant i\geqslant j\geqslant k$}$$
in other words, the  added entries are  constant on SW-NE diagonals
\end{enumerate}
\end{prop}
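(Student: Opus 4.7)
The plan is to establish each part by checking (a) the proposed completion $Z$ is a valid GOGAm extension of the given trapezoid, and (b) any GOGAm extension $W$ dominates $Z$ pointwise. Pointwise minimality (b) is easy: in part (1) the NW entries of any extension are positive integers, hence $\geq 1$; in part (2), iterating the Gelfand-Tsetlin inequality $W_{i-1,j-1} \leq W_{i,j}$ along the SW-NE diagonal through $(i,j)$ forces $W_{i,j} \geq W_{i-j+k,k} = X_{i-j+k,k}$, exactly the claimed value. The Gelfand-Tsetlin inequalities for $Z$ itself are direct from the formulas, so the real content is verifying the GOGAm inequality \eqref{GOGAm} for $Z$.

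The main tool is the telescoped form $S(X) = X_{n,n} + \sum_{l=1}^{n-k'}(X_{j_l+l, j_l} - X_{j_l+l-1, j_l})$ of the left-hand side of \eqref{GOGAm}, in which every bracketed term is non-positive by Gelfand-Tsetlin. For part (1), I fix any GOGAm extension $Y$ and notice that a position $(j_l+l, j_l)$ sits in the NW region of a right trapezoid exactly when $l \geq k$; the corresponding terms of $S(Z)$, read in the original non-telescoped form, vanish because both $Z_{j_l+l, j_l}$ and $Z_{j_{l+1}+l, j_{l+1}}$ equal $1$. A direct comparison then yields $S(Z) = S(Y) - V + 1$, where $V$ is a sum of differences of $Y$ along common SW-NE diagonals (hence $\geq 0$) plus one positive-integer entry, so $V \geq 1$ and $S(Z) \leq S(Y) \leq k'$.

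For part (2), let $r$ denote the largest index with $j_r \geq k$. Wherever $j_l \geq k$, the canonical completion formula reduces $Z_{\cdot, j_l}$ to column-$k$ values of $Y$; consequently the first $r$ bracketed terms of the telescoped $S(Z)$ collapse to $Y_{r+k,k} - Y_{k,k}$ and combine with $Z_{n,n} = Y_{k,k}$ to give $S(Z) = Y_{r+k,k} + \sum_{l=r+1}^{n-k'}(Y_{j_l+l, j_l} - Y_{j_l+l-1, j_l})$. When $r = n-k'$ only $Y_{r+k,k}$ survives, and I bound it by applying \eqref{GOGAm} to $Y$ with the sequence $(n, n-1, \ldots, n-r+1, k)$, whose extra contributions are all non-negative SW-NE diagonal differences. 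When $0 < r < n-k'$ I build the augmented sequence $(n, n-1, \ldots, n-r+1, k, j_{r+1}, \ldots, j_{n-k'})$, strictly decreasing thanks to $r \leq n-k$ (forced by having $r+1$ distinct $j_l$'s in $[k,n]$) and $j_{r+1} < k$; direct expansion shows $S(Y, \cdot, k') = A + S(Z)$ with $A = \sum_{l=0}^{r-2}(Y_{n, n-l} - Y_{n-1, n-l-1}) + (Y_{n, n-r+1} - Y_{k+r-1, k})$, each summand being a non-negative SW-NE diagonal step. The boundary case $r = 0$ is handled separately by the direct estimate $S(Z) = S(Y, \vec j, k') - Y_{n,n} + Y_{k,k} \leq S(Y, \vec j, k') \leq k'$.

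The main obstacle is the case $0 < r < n-k'$ in part (2): the prefix $(n, n-1, \ldots, n-r+1, k)$ must be threaded into the sequence in exactly the right way so that the expansion of the $Y$-side cleanly decomposes as $A + S(Z)$ with $A \geq 0$. Once this identification is in place, non-negativity of $A$ and of the auxiliary quantities appearing in the other cases all reduce to the same elementary fact that on any common SW-NE diagonal $Y$ is non-decreasing going NE.
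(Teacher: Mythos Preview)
Your argument is correct, and it takes a genuinely different route from the paper's. The paper proves a small replacement lemma: starting from any GOGAm extension $Y$, one may (i) overwrite the NW corner by $1$'s, or (ii) overwrite one further partial SW--NE diagonal by its column-$k$ value, and the result is still GOGAm. The paper's justification is that ``upon making the above replacements, the quantity on the left [of \eqref{GOGAm}] cannot increase'' --- which, unpacked, is exactly the SW--NE monotonicity you use. Part~(2) is then obtained by applying the replacement lemma diagonal by diagonal, from the innermost outward.

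Your approach collapses this induction: rather than modify $Y$ one diagonal at a time, you compare $S(Z,\vec j,k')$ directly to $S(Y,\vec j',k')$ for a cleverly chosen $\vec j'$ (the sequence $(n,n-1,\ldots,n-r+1,k,j_{r+1},\ldots,j_{n-k'})$) and read off $S(Y,\vec j',k')=A+S(Z)$ with $A\geq 0$. What you gain is a one-shot argument with an explicit witness sequence; what you pay is the three-way case split on $r$ and the need to justify that the prefix $(n,\ldots,n-r+1,k)$ is strictly decreasing (your observation $r\leq n-k$). The paper's version is more modular --- its lemma could in principle be reused elsewhere --- but its ``by inspection'' hides roughly the same amount of work as your explicit decomposition. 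Both proofs rest on the same elementary fact: along a common SW--NE diagonal, entries of a Gelfand--Tsetlin triangle weakly increase toward the NE.
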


\begin{proof}
In both cases, the completion above is the smallest Gelfand-Tsetlin triangle containing the trapezoid, therefore 
it is enough to check that if $X$ is a $(n,k)$ right or left GOGAm trapezoid, then its completion, 
as indicated in the proposition \ref{comp},  is a GOGAm triangle. The claim follows from the following lemma.

\begin{lemma} Let $X$ be a GOGAm triangle.
\begin{itemize}
 \item[{\bf i)}]The triangle obtained from $X$ by replacing the entries on the upper left triangle $(X_{ij}, n\geqslant i\geqslant j+k)$ by 1 is a GOGAm triangle. 
 \item[{\bf ii)}] Let $n\geqslant m\geqslant k\geqslant1$.
If $X$ is constant on each partial SW-NE diagonal $(X_{i+l,k+l}; n-i\geqslant l\geqslant 0)$ for $i\geqslant m+1$  then the triangle  obtained from $X$ by replacing the entries  $(X_{m+l,k+l};  n-m\geqslant l\geqslant 1)$ by $X_{m,k}$ is a GOGAm triangle.
\end{itemize}
\end{lemma}
\begin{proof}
It is easily seen that the above replacements give a Gelfand-Tsetlin triangle.
Both proofs then follow  by inspection of the formula (\ref{GOGAm}), 
which shows   that, upon making the above replacements, the quantity on the left cannot increase. 
 \end{proof}

{\sl End of proof of Proposition \ref{comp}.}
The case of right GOGAm trapezoids is dealt with by part {\bf i)} of the preceding Lemma.
 The case of left trapezoids follows by replacing successively the SW-NE partial diagonals  as in part {\bf ii)} of the Lemma. \end{proof}

\medskip

Canonical completion of a  $(5,2)$ left GOGAm trapezoid:

\begin{center}
\begin{tikzpicture}[scale=.5]
\tikzstyle{box}=[-,line width=0.3mm,color=red]
\tikzstyle{ntrap}=[color=red]
\draw  (0,4) node{$1$} ;\draw  (2,4) node{$1$} ;
\draw[ntrap](4,4) node{$2$} ;
\draw[ntrap](6,4) node{$2$} ;\draw[ntrap](8,4) node{$3$} ;
\draw  (1,3) node{$1$} ;\draw(3,3) node{$2$} ;
\draw[ntrap](5,3) node{$2$} ;\draw[ntrap](7,3) node{$3$} ;
\draw  (2,2) node{$1$} ;
\draw(4,2) node{$2$} ;
\draw[ntrap](6,2) node{$3$} ;
\draw(3,1) node{$2$} ;\draw(5,1) node{$3$} ;
\draw(4,0) node{$3$} ;

\end{tikzpicture}
\end{center}

Let $X=(X_{i,j})_{n\geqslant i\geqslant j\geqslant 1}$ be a Gelfand-Tsetlin triangle, and $k$ such that
$X_{i,j}=1$ for $i-j\geq k$. Let $Y$ be the image of $X$ by the 
 Sch\"utzenberger involution, it follows easily from the description of the operations $s_i$ that 
$Y_{i,j}=1$ for $i-j\geq k$.
in particular, the image, by the 
 Sch\"utzenberger involution, of the canonical completion of a right Magog $(n,k)$ trapezoid is the   canonical completion of a right GOGAm $(n,k)$ trapezoid, and vice versa. It follows that 
the Sch\"utzenberger involution induces a bijection between right Magog and GOGAm trapezoids.
\section{Pentagons}
\begin{definition}
For $(n,k,l,m)$, with $n\geq k,l,m$,  a $(n,k,l,m)$ Gog (resp. GOGAm) pentagon is  an array of positive integers $X=(x_{i,j})_{n\geqslant i\geqslant j\geqslant 1; k\geqslant j;j+l\geqslant i+1}$
formed from the intersection of the $k$ leftmost NW-SE diagonals, the $l$  rightmost SW-NE  diagonals and the $m$ bottom lines 
 of a Gog (resp. GOGAm) triangle  of size $n$.
\end{definition}

Remark that if $m\geq k+l+1$ then the pentagon is  a rectangle, whereas if $m\leq k,l$ then it is a Gelfand-Tsetlin triangle of size $m$.
Similarly to the case of trapezoids, 
one can check that a $(n,k,l,m)$ Gog (or GOGAm) pentagon
 has a canonical (i.e. minimal) completion as  a $(n,k)$ left  trapezoid, or as a 
$(n,l)$ right trapezoid, and finally as a  triangle of size $n$.

\section{Results and conjectures}
\begin{theorem}[Zeilberger \cite{zeilberger}]
For all $k\leq n$, the $(n,k)$ right Gog and Magog  trapezoids are equienumerated
\end{theorem}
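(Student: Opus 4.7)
The plan is to translate the equinumeration into an identity between two explicit refined enumerations and then verify that identity. First I would use the canonical-completion proposition to recast a $(n,k)$ right Gog trapezoid as a Gog triangle of size $n$ whose entries $X_{i,j}$ with $i \geq j+k$ all equal $j$. Via the bijection with alternating sign matrices recalled in Section 3, this corresponds to a refined class of ASMs of size $n$ whose partial column sums are prescribed on the top $n-k$ rows; their count can be packaged as a suitable specialization of the Izergin--Korepin determinant coming from the six-vertex model with domain-wall boundary conditions.

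Next I would treat right Magog trapezoids in parallel fashion. Their canonical completion fills the upper-left sub-triangle with $1$s, so a $(n,k)$ right Magog trapezoid corresponds to a Magog triangle---hence a TSSCPP---with a prescribed boundary profile. Through Andrews' non-intersecting-lattice-path encoding of TSSCPPs \cite{andrews} and the Lindstr\"om--Gessel--Viennot lemma, this refined count can again be written as a determinant, this time with binomial entries.

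The remaining and main step is to prove that the two determinantal expressions are equal for all $n$ and $k$. I expect this to be the chief obstacle: the two determinants arise from very different combinatorial worlds and no combinatorial reason for their equality is known. Zeilberger's route in \cite{zeilberger} handles this by an elaborate constant-term calculus, pairing ``bad'' configurations via a sign-reversing involution across a huge auxiliary sum. An alternative would be to evaluate each determinant independently in closed product form, as a ratio of factorials, and check agreement factor by factor; this relies on the refined enumerations of Andrews and of Mills--Robbins--Rumsey and is no easier than the original argument. Either way, the absence of a direct combinatorial matching is precisely what motivates the search for explicit bijections, including the left-trapezoid bijections developed in the present paper.
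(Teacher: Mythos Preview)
The paper does not prove this theorem at all: it is stated with attribution to Zeilberger and a citation to \cite{zeilberger}, and the text immediately moves on. So there is no ``paper's own proof'' to compare against; the paper treats this as background.

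Your proposal is likewise not a proof but a roadmap, and you are candid about this. You correctly locate the entire difficulty in the third step---showing that two determinantal or constant-term expressions, arising from unrelated combinatorial models, coincide---and you correctly note that Zeilberger's actual argument in \cite{zeilberger} is the known way through. In that sense your write-up and the paper are in agreement: both defer the substance to \cite{zeilberger}. If the intent was to supply an independent proof, the proposal has a genuine gap, since the ``main step'' is the whole theorem and you have only named it, not carried it out.

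One technical caution on your sketch: packaging the $(n,k)$ right Gog trapezoid count as a specialization of the Izergin--Korepin determinant is not what Zeilberger did (that is Kuperberg's later route to the unrefined ASM count), and it is not clear that the trapezoid refinement---fixing the top $n-k$ rows of the partial-sum matrix---corresponds to a clean specialization of spectral parameters in the six-vertex partition function. Zeilberger's own argument works directly with constant-term expressions for both the Gog and Magog sides and never passes through the six-vertex model. If you were to flesh this out, staying closer to Zeilberger's constant-term formulation would be safer than invoking Izergin--Korepin.
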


Composing by the Sch\"utzenberger involution yields that for all $k\leq n$, the $(n,k)$ right Gog and GOGAm  trapezoids are equienumerated.
In \cite{BC} a bijective proof of this last fact is given for $(n,1)$ and $(n,2)$ right trapezoids.

\begin{conj}\label{conjecture::equipotence::gog::GOGAm::nk::gauche}
For all $k\leq n$, the $(n,k)$ left Gog and GOGAm   trapezoids are equienumerated.
\end{conj}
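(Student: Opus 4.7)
The plan is to approach the conjecture by induction on $k$. The base case $k=1$ is essentially free: a $(n,1)$ left Gog trapezoid is determined by its single column, which must be a weakly decreasing sequence of positive integers $X_{1,1}\geqslant X_{2,1}\geqslant\ldots\geqslant X_{n,1}\geqslant 1$ with the SW-NE diagonal bound $X_{i,1}\leqslant n-i+1$. On the GOGAm side, the canonical completion of Proposition \ref{comp} fills the triangle with entries constant on SW-NE diagonals, and when (\ref{GOGAm}) is applied to this completion the interior differences telescope, leaving exactly the bound $X_{m,1}\leqslant n-m+1$. So for $k=1$ the two sets of trapezoids coincide verbatim and the bijection is the identity.

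For the inductive step I would peel off the rightmost NW-SE diagonal $(X_{i,k+1})_{k+1\leqslant i\leqslant n}$, apply the inductive bijection to the first $k$ columns, and then append a new column consistently on each side. As a preparatory step one needs an intrinsic characterization of $(n,k)$ left GOGAm trapezoids obtained by specializing (\ref{GOGAm}) to strictly decreasing sequences $n=j_0>j_1>\ldots>j_{n-k'}\geqslant 1$ whose sampled positions $(j_i+i,j_i)$ all satisfy $j_i\leqslant k$; one must check that the canonical completion preserves these inequalities and that they are also sufficient, so that the reduced inequalities intrinsically define left GOGAm trapezoids.

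The main obstacle is that the admissible completions on the two sides are of very different natures. On the Gog side, once the first $k$ columns are fixed, the new column is constrained pointwise only: weakly increasing from below, strictly greater than its neighbor in column $k$, and bounded by $n-i+k+1$. On the GOGAm side the residual set is governed by the global inequalities (\ref{GOGAm}), which couple the new column to arbitrary subsets of entries in the earlier columns across all rows. A clean extension of the inductive bijection will therefore have to modify the bijection on the first $k$ columns as a function of the new column, and without an additional structural device — perhaps a transfer map analogous to the Sch\"utzenberger involution but compatible with left (rather than right) truncation — the direct inductive step is unlikely to close.

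An alternative route worth serious effort is to bypass bijection and compute the number of $(n,k)$ left GOGAm trapezoids by a determinantal or constant-term expression in the spirit of Zeilberger \cite{zeilberger}. By the reflection (\ref{reflection}), the count of left Gog trapezoids already equals the count of right Gog trapezoids, which by Zeilberger's theorem and the Sch\"utzenberger bijection on right trapezoids equals the count of right GOGAm trapezoids. So the conjecture reduces to the purely GOGAm statement that left and right GOGAm trapezoids of the same shape are equinumerated. Because the canonical completion of a left GOGAm trapezoid is strikingly simple (constant on SW-NE diagonals, versus ones filling the upper-left triangle in the right case), the left enumeration may well reduce to a count of families of non-intersecting lattice paths on a truncated staircase region, amenable to Lindstr\"om--Gessel--Viennot; this is where I would concentrate the technical effort.
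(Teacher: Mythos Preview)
The statement is a conjecture that remains open in the paper; the paper does not prove it in general. What the paper establishes is a bijective proof for $k=1$ and $k=2$ only. Your $k=1$ argument is correct and coincides exactly with the paper's: both sides are the same set of nondecreasing sequences $X_{n,1}\leqslant\ldots\leqslant X_{1,1}$ with $X_{j,1}\leqslant n-j+1$, and the bijection is the identity.

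Beyond $k=1$, however, your proposal is not a proof but a research plan, and you say so yourself (``the direct inductive step is unlikely to close''). That is an honest assessment, but it means there is a genuine gap: nothing in the proposal establishes the case $k=2$, let alone general $k$. The paper's $k=2$ proof is not by induction on $k$ at all. Instead it constructs an explicit bijection by scanning the inversions $(i_l,1)$ in the leftmost NW-SE diagonal of the Gog trapezoid from top to bottom; at each inversion it locates the maximal run of equal values in column~$2$ above it, subtracts~$1$ from those entries, and shifts a block of column~$1$ entries up by one row. This is a modification of what the paper calls the \emph{standard procedure} (subtract~$1$ from every entry covered by an inversion), and the paper verifies directly that the result satisfies the simplified GOGAm inequalities $X_{i,2}\leqslant n-i+2$ and $X_{i,2}-X_{i-1,1}+X_{i,1}\leqslant n-i+1$. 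The inverse map is described by reversing the scan. None of this structure appears in your inductive sketch, and your peeling-off-a-column idea does not obviously reproduce it.

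Your alternative enumerative route is reasonable as a research suggestion, and your reduction---via the reflection (\ref{reflection}) and Zeilberger's theorem---to the statement that left and right GOGAm trapezoids are equinumerated is correct. But the paper does not pursue this either, and no determinantal or Lindstr\"om--Gessel--Viennot formula for left GOGAm trapezoids is supplied. So as it stands the proposal proves only the $k=1$ case, which the paper also regards as trivial.
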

In the section \ref{bij} we will give a bijective proof of this conjecture for $(n,1)$ and $(n,2)$ trapezoids. Remark that the right and left Gog trapezoids of   shape $(n,k)$ are equienumerated (in fact a simple bijection between them was given above as (\ref{reflection})).
\begin{conj}
For any $n,k,l,m$  the
$(n,k,l,m)$ Gog and GOGAm pentagons are equienumerated.
\end{conj}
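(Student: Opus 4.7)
My plan is to approach this conjecture through its special cases and attempt an interpolation. When $k = n$ and $m \geq l$, the pentagon becomes a $(n,l)$ right trapezoid, so equinumeration reduces to Zeilberger's theorem composed with the Sch\"utzenberger involution. When $l = n$ and $m \geq k$, it becomes a $(n,k)$ left trapezoid, reducing to Conjecture \ref{conjecture::equipotence::gog::GOGAm::nk::gauche}. When $m \geq k+l+1$ the pentagon is a rectangle, obtained as the intersection of the two trapezoid shapes; and when $m \leq \min(k,l)$ the pentagon is simply a Gelfand--Tsetlin triangle of size $m$, so the Gog/GOGAm count reduces to triangle equinumeration at size $m$. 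I would begin by checking these boundary cases carefully and verifying the conjecture numerically for moderate $(n,k,l,m)$ to make sure the minimal-completion description really does yield matching counts on both sides.

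The natural induction is on $m$ for fixed $(n,k,l)$: a $(n,k,l,m)$ pentagon extends a $(n,k,l,m-1)$ pentagon by one bottom row, subject to interlacing together with either the strict-increase (Gog) or the GOGAm condition. I would attempt to construct a bijection $\Phi_{n,k,l,m}$ from Gog to GOGAm pentagons by extending $\Phi_{n,k,l,m-1}$ row by row, matching admissible bottom rows on both sides. In parallel, I would try to restrict the explicit trapezoid bijections already available---the $(n,1)$ and $(n,2)$ right trapezoid bijections from \cite{BC}, and the $(n,1)$ and $(n,2)$ left trapezoid bijections from Section \ref{bij} of the present paper---to pentagons with $l \leq 2$ or $k \leq 2$, checking that each bijection preserves the relevant partial-triangle sub-shape. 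Such a restriction, if it works, would settle the conjecture in these low-parameter cases and reveal the structure of a general bijection.

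The main obstacle is the non-local nature of the GOGAm inequalities (\ref{GOGAm}): they involve arbitrary strictly decreasing sequences of column indices $n = j_0 > j_1 > \cdots > j_{n-k} \geq 1$ stretching across the whole triangle, so the GOGAm condition on a pentagon's entries cannot in general be separated from the GOGAm condition on its completion. Although Proposition \ref{comp} provides a canonical completion for left and right GOGAm trapezoids separately, the compatibility of the two completions on a pentagon is a new point requiring argument, and any row-by-row induction must ensure that all the resulting global inequalities (\ref{GOGAm}) are simultaneously satisfied. Because the Sch\"utzenberger involution acts non-locally, I expect that a purely local or restriction-based approach will fall short, and that a full proof will either demand a substantially new bijective construction or a generating-function identity tailored to the pentagon shape---making this conjecture morally at least as hard as the left-trapezoid conjecture it generalises.
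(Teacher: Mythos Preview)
The statement is a \emph{conjecture}, and the paper does not prove it. What the paper actually contributes toward it is an explicit bijection in the smallest non-trivial case, the $(n,3,3,3)$ pentagons (which are size-$3$ triangles): it lists all $2^3=8$ inversion patterns of a Gog pentagon and, for each, writes down the corresponding GOGAm pentagon, applying the standard procedure in seven cases and an ad hoc rule in the eighth. The paper also reports a similar bijection for $(n,3,3,5)$, too long to include. That is the full extent of the paper's ``proof'' for this statement.

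Your proposal is not a proof either, and you are right to frame it as a plan and to flag the conjecture as at least as hard as the left-trapezoid conjecture it contains. Your boundary-case reductions are accurate, and your diagnosis of the main obstacle---the non-locality of the GOGAm inequalities~(\ref{GOGAm}) and the absence of a single canonical completion compatible with both trapezoid directions---is exactly the issue. Where your plan diverges from what the paper does: you aim at a row-by-row induction on $m$ and at restricting the known trapezoid bijections to pentagon sub-shapes, whereas the paper makes no attempt at either and instead settles for a brute-force case analysis on a single tiny shape. Your approach is more structural but, as you yourself note, is unlikely to go through without new ideas; the paper's approach gives concrete evidence but no mechanism for general $(n,k,l,m)$. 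Neither constitutes a proof of the conjecture, and you should not present your plan as one.
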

We will give a bijective proof for $(n,3,3,3)$ pentagons (which, in this case, are actually triangles).

If we consider left GOGAm  trapezoids as GOGAm triangles, using the canonical completion, then we can take their image by the Sch\"utzenberger involution and obtain a subset of the Magog triangles, for each $(n,k)$. It seems however that this subset does not have a simple direct characterization. This shows that GOGAm triangles and trapezoids are a useful tool in  the bijection problem between Gog and Magog triangles.
\section{statistics}
We will define three statistics on the Gog, Magog and GOGAm triangles.
\subsection{The $\alpha $ statistics}
For a Gog triangle $X$, we define $\alpha_{Gog}(X)$ as the number of indices $k$ such that  $X_{k,1}=1$.
For a GOGAm triangle $X$, let $l\geq 0$ be the largest integer such that $X_{n-l+1,1}=1$. If there exists an inversion of the triangle $X$ of the form $(n-l+k,k+1)$ with $1\leq k\leq l-1$ then  we define 
$\alpha_{GOGAm}(X)=l-1$ If there is no such inversion, we put $\alpha_{GOGAm}(X)=l$.
For a Magog triangle $X$ we put
$\alpha_{Magog}(X)=\alpha_{GOGAm}(S(X))$ where $S$ is the Sch\"utzenberger involution.
\subsection{The $\beta$ statistics}
For a Gog triangle $X$ we define $\beta_{Gog}(X)=X_{1,1}$. For a GOGAm triangle we put
$\beta_{GOGAm}(X)=X_{1,1}$. For a Magog triangle 
$\beta_{Magog}(X)=\sum_{i=1}^n X_{n,i}-\sum_{i=1}^{n-1}X_{n-1,i}$.
It is a well known property of the Sch\"utzenberger involution that
$\beta_{GOGAm}(S(X))=
\beta_{Magog}(X)$ for any Magog triangle $X$.
\subsection{The $\gamma $ statistics}
For a Gog triangle $X$, of size $n$, we define $\gamma _{Gog}(X)$ as the number of indices $k$ such that  $X_{k,k}=n$.

Let $X$ be a Magog triangle. Let $k$ be the largest integer such that $X_{k,k}=k$. We define  a sequence of pairs $(i_l,j_l)_{1\leq l\leq k}$ by $(i_1,j_1)=(k,k)$ and, if $k>1$,

$(i_{l+1},j_{l+1})=(i_l,j_l-1)$ if $X_{i_l,j_l-1}=X_{i_l-1,j_l-1}$, 

$(i_{l+1},j_{l+1})=(i_l-1,j_l-1)$ if $X_{i_l,j_l-1}<X_{i_l-1,j_l-1}$

Since $j$ decreases by 1 at each step, 
the sequence ends at step $k$, when $j_{k}=1$.
We put $\gamma _{Magog}(X)=i_k$.

If $X$ is a GOGAm triangle we put $\gamma _{GOGAm}(X)=\gamma _{Magog}(S(X))$.

\begin{conj}
For any $n$ the three statistics $\alpha,\beta,\gamma $ are equienumerated on, respectively, Gog, Magog, and GOGAm triangles.
\end{conj}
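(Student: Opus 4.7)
The plan is to reduce the conjecture, via the Sch\"utzenberger involution $S$, to a single refined enumeration statement. From the definitions of the statistics and the stated invariance of $\beta$ under $S$, one has $\alpha_{Magog}(X)=\alpha_{GOGAm}(S(X))$, $\beta_{Magog}(X)=\beta_{GOGAm}(S(X))$ and $\gamma_{Magog}(X)=\gamma_{GOGAm}(S(X))$ for every Magog triangle $X$. Since $S$ is an involutive bijection between Magog and GOGAm triangles, each of $\alpha$, $\beta$, $\gamma$ has the same distribution on Magog as on GOGAm. The conjecture therefore reduces to showing that $\alpha_{Gog}$ on Gog triangles, $\beta_{Magog}$ on Magog triangles, and $\gamma_{Magog}$ on Magog triangles all have the same distribution.

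The next step is to identify $\alpha_{Gog}$ with a familiar refined ASM statistic. Column $1$ of any ASM $M$ cannot contain a $-1$: if $M_{i,1}=-1$ then the leftmost nonzero entry of row $i$ would be $-1$, violating the row alternation. Hence column $1$ of $M$ has a unique $+1$ at some row $r$, so $\tilde M_{l,1}=1$ for $l\leq r$ and $\tilde M_{l,1}=0$ for $l>r$. Under the Gog-ASM bijection, $X_{k,1}=1$ iff $\tilde M_{n-k+1,1}=1$ iff $k\geq n-r+1$, so $\alpha_{Gog}(X)=r$ is precisely the row of the unique $+1$ in the first column of $M$. Its distribution on ASMs of size $n$ is the classical refined ASM sequence, equal by the symmetries of ASMs to the distribution of the position of the $1$ in the top row.

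It then remains to show that both $\beta_{Magog}$ and $\gamma_{Magog}$ on Magog triangles are distributed as the refined ASM numbers. For $\beta_{Magog}(X)=\sum_i X_{n,i}-\sum_i X_{n-1,i}$, which is the ``weight of the top row'' statistic in the Gelfand-Tsetlin/SSYT correspondence, I would try to match it with a refinement of TSSCPP counts considered by Mills, Robbins and Rumsey and known or conjectured to be equidistributed with the refined ASM numbers. The main obstacle will be $\gamma_{Magog}$: its definition via the trace $(i_l,j_l)$ is intrinsically path-like and does not match any refinement familiar to me from the literature, so I would first check on small $n$ by computer that its distribution is indeed the refined ASM one, and then look for a reformulation of the trace in terms of a more standard model (monotone triangles, non-intersecting lattice paths, fully packed loops, and so on) where refined enumerations are accessible. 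Because no explicit ASM-TSSCPP bijection is known, any direct combinatorial proof at this step amounts to a refined version of the central problem the paper addresses, so I expect the final argument will require either a subtle generating-function identity or the construction of a new statistic-preserving bijection.
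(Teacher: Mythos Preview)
The statement you are addressing is a \emph{conjecture}; the paper offers no proof. After stating it, the paper only records two remarks: that the Magog/GOGAm equidistribution is automatic from the definitions via the Sch\"utzenberger involution (exactly your first paragraph), and that on the Gog side $\alpha_{Gog},\beta_{Gog},\gamma_{Gog}$ are the positions of the $1$ in the leftmost column, the bottom row, and the rightmost column of the corresponding ASM (which you also recover for $\alpha_{Gog}$). So on those points you are aligned with the paper, and your honest acknowledgement at the end that the remaining step amounts to a refined ASM--TSSCPP problem is the correct assessment: the conjecture is open.

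There is, however, a genuine gap in your reduction. The conjecture asserts that the \emph{triple} $(\alpha,\beta,\gamma)$ has the same (joint) distribution on Gog, on Magog, and on GOGAm triangles; this is the natural reading and is confirmed by the paper's remark that ``the statistics $\alpha,\beta,\gamma$ are equienumerated on Magog and GOGAm triangles, by construction'' together with the reference to joint enumerations in \cite{ayerromik}. After the Sch\"utzenberger reduction, what remains is to compare $(\alpha_{Gog},\beta_{Gog},\gamma_{Gog})$ on Gog with $(\alpha_{Magog},\beta_{Magog},\gamma_{Magog})$ on Magog. Your sentence ``the conjecture therefore reduces to showing that $\alpha_{Gog}$ on Gog triangles, $\beta_{Magog}$ on Magog triangles, and $\gamma_{Magog}$ on Magog triangles all have the same distribution'' is neither necessary nor sufficient for that: it mixes a Gog statistic with two Magog statistics and compares only marginals. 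Even if you intended only the marginal version of the conjecture, the correct residual statement is that each of $\alpha$, $\beta$, $\gamma$ is equidistributed between Gog and Magog, not that $\alpha$ on one family matches $\beta$ on another. The rest of your plan (matching $\beta_{Magog}$ to a Mills--Robbins--Rumsey refinement, and searching for a reformulation of the trace defining $\gamma_{Magog}$) is reasonable heuristics, but as you yourself note it does not constitute a proof.
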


We remark that the statistics $\alpha,\beta,\gamma $ are equienumerated on Magog and GOGAm triangles, by construction.

Finally, if we identify  Gog triangles with alternating sign matrices, then the three statistics $\alpha_{Gog},\beta_{Gog},\gamma_{Gog} $ correspond,  to the position of the 1 in respectively, the leftmost column, the bottom line, and the rightmost column. Some recent results on the joint enumeration of these statistics can be found in \cite{ayerromik}.
\section{Bijections between Gog and GOGAm left trapezoids}\label{bij}
\subsection{$(n,1)$ left trapezoids}
The sets of  $(n,1)$  left Gog trapezoids and of  $(n,1)$  left GOGAm trapezoids  both coincide with the set of nondecreasing sequences $X_{n,1},\ldots, X_{1,1}$ satisfying
$$X_{j,1}\leq n-j+1$$ (note that these sets  are counted by Catalan numbers). Therefore the identity map provides a trivial bijection between these two sets.

\subsection{$(n,2)$ left trapezoids}
In order to treat the $(n,2)$ left  trapezoids we will recall some definitions from \cite{BC}.
\subsubsection{Inversions}  

\begin{definition}\label{inversions}

An  inversion in a Gelfand-Tsetlin triangle $X$ is a pair $(i,j)$ such that $X_{i,j}=X_{i+1,j}$.

\end{definition}
For example the following Gog triangle contains three inversions, $(2,2)$, $(3,1)$, $(4,1)$, the respective equalities being depicted on the  picture:

\bigskip

\begin{center}

 \begin{tikzpicture}[scale=0.5]
      \draw  (1,3) node{$1$} ;\draw  (3,3) node{$2$} ;\draw  (5,3) node{$3$} ;\draw  (7,3) node{$4$} ;\draw  (9,3) node{$5$} ;
            \draw  (2,2) node{$1$} ;\draw  (4,2) node{$3$} ;\draw  (6,2) node{$4$} ;\draw  (8,2) node{$5$} ;
                  \draw  (3,1) node{$1$} ;\draw  (5,1) node{$4$} ;\draw  (7,1) node{$5$} ;
                        \draw  (4,0) node{$2$} ;\draw  (6,0) node{$4$} ;
                                 \draw  (5,-1) node{$3$} ;

\draw[line width=0.3mm,color=red] (1.9,2.2)--(1.1,2.85);\draw[line width=0.3mm,color=red] (2.9,1.2)--(2.1,1.85);
\draw[line width=0.3mm,color=red] (5.9,.2)--(5.1,.85);
\end{tikzpicture}

\end{center}

The name {\sl inversion} comes from the fact that, for a Gog triangle corresponding to an alternating sign matrix which is a permutation, its inversions are in one to one correspondance with the inversions of the permutation.

\begin{definition}
Let  $X=(X_{i,j})_{n\geqslant i\geqslant j\geqslant 1}$ be a Gog triangle and let $(i,j)$ be such that
 $1\leqslant j\leqslant i\leqslant n$. 

An inversion $(k,l)$ covers $(i,j)$
 if $i=k+p$ and $j=l+p$ for some $p$ with $1\leqslant p\leqslant n-k$. 

\end{definition}
 The entries $(i,j)$ covered by an inversion are depicted with $"+"$ on the following picture.
\begin{center}

 \begin{tikzpicture}[scale=0.5]
      \draw  (1,3) node{$\circ$} ;\draw  (3,3) node{$\circ$} ;\draw  (5,3) node{$\circ$} ;\draw  (7,3) node{+} ;\draw  (9,3) node{$\circ$} ;
            \draw  (2,2) node{$\circ$} ;\draw  (4,2) node{$\circ$} ;\draw  (6,2) node{+} ;\draw  (8,2) node{$\circ$} ;
                  \draw  (3,1) node{$\circ$} ;\draw  (5,1) node{$\circ$} ;\draw  (7,1) node{$\circ$} ;
                        \draw  (4,0) node{$\circ$} ;\draw  (6,0) node{$\circ$} ;
                                 \draw  (5,-1) node{$\circ$} ;

\draw[line width=0.3mm,color=red] (4.15,1.8)--(4.9,1.2);

\end{tikzpicture}

\end{center}
\subsubsection{Standard procedure}
The basic idea for our bijection 
 is that for any inversion in the Gog triangle we should subtract 1 from the entries covered by this inversion, scanning the inversions along the successive NW-SE diagonals, starting from the rightmost diagonal, and scanning each diagonals from NW to SE. We call this the {\sl standard procedure}. If the successive triangles obtained after each of these steps are Gelfand-Tsetlin triangles, then we say that the initial triangle is {\sl admissible}.

\begin{prop}
Let $X$ be a Gog triangle of size $n$, then the triangle obtained by applying the standard procedure to $X$ is a GOGAm triangle of size $n$.
\end{prop}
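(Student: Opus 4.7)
The plan is to prove the proposition in two stages. First I would verify that the standard procedure is well-defined, i.e., that every Gog triangle is admissible, so that each intermediate array (and in particular the final triangle $Y$) is a Gelfand--Tsetlin triangle. Then I would check that $Y$ satisfies the characterization (\ref{GOGAm}) of GOGAm triangles.

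The key observation that organizes the first stage is that an inversion $(k,l)$ only modifies entries on the single SW--NE diagonal $i-j=k-l$, so the modifications decompose into independent operations on each SW--NE diagonal. Since the scanning order processes NW--SE diagonals from right to left and NW to SE within each, at the moment we process inversion $(i,j)$, the entries $X_{i,j}$, $X_{i+1,j}$ and $X_{i+1,j+1}$ are still the original Gog values. The Gog strict row-increase $X_{i+1,j}<X_{i+1,j+1}$, combined with $X_{i,j}=X_{i+1,j}$, gives $X_{i,j}<X_{i+1,j+1}$, which is exactly the slack needed to subtract $1$ from $X_{i+1,j+1}$ without violating GT. Cascading this along the SW--NE diagonal, one obtains the global formula $Y_{i,j}=X_{i,j}-\nu(i,j)$, where $\nu(i,j)$ is the number of original inversions at the positions $(i-p,j-p)$ for $1\leq p\leq j-1$. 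Verifying GT for $Y$ then splits into two pieces: the diagonal inequality $Y_{i,j}\leq Y_{i+1,j+1}$ follows from the identity $Y_{i+1,j+1}-Y_{i,j}=(X_{i+1,j+1}-X_{i,j})-I_{i-j}(j)$ and the fact that $I_{i-j}(j)=1$ forces $X_{i+1,j+1}-X_{i,j}\geq 1$ by Gog strictness; the cross-diagonal inequality $Y_{i+1,j}\leq Y_{i,j}$ reduces to $a_j^{(c)}-a_j^{(c+1)}\geq N_c(j)-N_{c+1}(j)$ (with $a_m^{(c)}=X_{c+m,m}$ and $N_c(j)=\#\{l<j:a_l^{(c)}=a_l^{(c+1)}\}$), which I would establish by induction on $j$, tracking the ``jumps'' of the sequences on adjacent SW--NE diagonals.

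For the second stage, I would telescope the left-hand side of (\ref{GOGAm}) to the equivalent form
\[
Y_{n,n}+\sum_{i=1}^{n-k}\bigl(Y_{j_i+i,j_i}-Y_{j_i+i-1,j_i}\bigr)\leq k.
\]
Each summand $Y_{j_i+i,j_i}-Y_{j_i+i-1,j_i}$ is the ``vertical gap'' in column $j_i$ of $Y$, which by the formula $Y=X-\nu$ equals the corresponding $X$-gap minus the difference of inversion counts on the two SW--NE diagonals through the two positions. Via the top-row constraint $X_{n,j}=j$ one can sum these contributions telescopically and reinterpret them as a count of original Gog inversions lying in a specific region determined by $(j_i)$; the total is then bounded by $n-m=k$ using the Gog row-strictness and the constraint that the rightmost entries of the diagonals used in the telescoping all satisfy $X_{n,n-c}=n-c$.

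The main obstacle is the GOGAm verification in the second stage: condition (\ref{GOGAm}) consists of a family of inequalities indexed by all strictly decreasing sequences $j_1>\cdots>j_{n-k}$, so a direct brute-force check is impractical. The cleanest route is likely to match each positive contribution in the telescoped sum with a distinct original inversion, producing a one-to-one correspondence into a set of size at most $k$; making this correspondence explicit (or producing the right induction on the length of $(j_i)$) is where the real work lies. The admissibility step, by contrast, is essentially a local check powered by the Gog strictness.
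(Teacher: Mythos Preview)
Your first stage contains a genuine error: it is \emph{not} true that every Gog triangle is admissible. A small counterexample is the size-$4$ Gog triangle with rows $1,2,3,4\,/\,1,3,4\,/\,2,3\,/\,2$. Its inversions are $(3,1),(2,2),(1,1)$, and the output of the standard procedure has $Y_{4,3}=3$ but $Y_{3,3}=4-2=2$, so $Y$ is not even a Gelfand--Tsetlin triangle, let alone the intermediate steps. The paper itself never asserts admissibility of all Gogs; the very next proposition (that permutation Gogs are admissible) would be vacuous if it held in general, and the $(n,3,3,3)$ section explicitly exhibits a non-admissible case. So the inductive argument you sketch for the cross-diagonal inequality $Y_{i+1,j}\leq Y_{i,j}$ cannot go through.

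What the paper actually proves is only the GOGAm inequality (\ref{GOGAm}) for the array $Y_{i,j}=X_{i,j}-c_{i,j}$; the Gelfand--Tsetlin property is not verified there (and, as the example shows, can fail). So your Stage~1 aims at strictly more than the paper establishes, and that extra target is false.

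For Stage~2 you are on the right track with the telescoping rewriting, which is exactly what the paper does, but you then take a detour through ``reinterpreting as a count of inversions in a region'' and matching positive contributions with inversions. The paper's route is shorter: after writing
\[
S \;=\; X_{n,n}\;-\;\sum_{i=1}^{n-k}\Bigl[(X_{j_i+i-1,j_i}-X_{j_i+i,j_i})+(c_{j_{i-1}+i-1,j_{i-1}}-c_{j_i+i-1,j_i})\Bigr]\;-\;c_{j_{n-k}+n-k,\,j_{n-k}},
\]
one simply observes a dichotomy for each bracket: either $X_{j_i+i-1,j_i}>X_{j_i+i,j_i}$, giving $\geq 1$ from the first summand, or equality holds, in which case $(j_i+i-1,j_i)$ is an inversion and the monotonicity $c_{i+1,j+1}\geq c_{i,j}$ along SW--NE diagonals forces the second summand to be $\geq 1$. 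Either way each bracket is $\geq 1$, and $X_{n,n}=n$ finishes the bound $S\leq k$. No global bijective matching is needed.
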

\begin{proof}
Let us denote by $Y$ the triangle obtained from $X$ by the standard procedure. One has 
$Y_{i,j}=X_{i,j}-c_{i,j}$ where $c_{i,j}\geq 0$ is the number of inversions which are covered
 by $(i,j)$. Note that this number weakly increases along SW-NE diagonals:
$c_{i+1,j+1}\geq c_{i,j}$.

 We have to prove that for  all $n=j_0> j_1> j_2\ldots>j_{n-k}\geq 1$, one has
$$
\left(\sum_{i=0}^{n-k-1}Y_{j_i+i, j_i}-Y_{j_{i+1}+i,j_{i+1}}\right)+Y_{j_{n-k}+n-k,j_{n-k}}\leq k
$$

Let us rewrite the sum on the left hand side as
\begin{eqnarray*}
S&=&X_{n,n}-c_{n,n}-
\sum_{i=1}^{n-k}(X_{j_i+i-1, j_i}-c_{j_i+i-1,j_i}-X_{j_i+i,j_i}+c_{j_i+i,j_i})\\
&=&X_{n,n}-\left[\sum_{i=1}^{n-k}(X_{j_i+i-1, j_i}-X_{j_i+i,j_i})
+(c_{j_{i-1}+i-1,j_{i-1}}-c_{j_i+i-1,j_i})\right]-c_{j_{n-k}+n-k,j_{n-k}}.
\end{eqnarray*}
One has $X_{n,n}=n$, furthermore, for each $i$ one has either $$X_{j_i+i-1, j_i}-X_{j_i+i,j_i}\geq 1$$ or $$X_{j_i+i-1, j_i}=X_{j_i+i,j_i}$$
 in which case $(j_i+i-1, j_i)$ is an inversion, therefore
$$c_{j_{i-1}+i-1,j_{i-1}}-c_{j_i+i-1,j_i}\geq 1.$$ It follows that for each term in the sum 
$$(X_{j_i+i-1, j_i}-X_{j_i+i,j_i})
+(c_{j_{i-1}+i-1,j_{i-1}}-c_{j_i+i-1,j_i})\geq 1$$  therefore $$S\leq n-(n-k)-c_{j_{n-k}+n-k,j_{n-k}}\leq k.$$
\end{proof}

For admissible triangles the standard procedure is invertible, meaning that the  triangle can recovered uniquely from its associated GOGAm triangle. Indeed it suffices for that to scan the inversions of the GOGAm triangle  in the reverse order and to add one to the entries covered by each inversion, in order to recover the original admissible triangle.

The following property is not difficult to check, we leave it as an exercise to the reader.
\begin{prop}
The Gog triangles corresponding to  permutation matrices, in the correspondance between alternating sign matrices and Gog triangles, are all admissible.
\end{prop}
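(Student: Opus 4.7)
The plan is to check the Gelfand--Tsetlin (GT) property after every single step of the standard procedure by first isolating the strict inequalities that must hold just before each step, then verifying them using the permutation structure. When we process an inversion $(k,l)$ of $X$, the step decreases by one each entry $Y_{k+p,l+p}$ for $p=1,\ldots,n-k$ on the SW-NE diagonal $d=k-l$. A case analysis of the GT conditions touching at least one of these entries shows that those comparing two consecutive entries of diagonal $d$ are automatically preserved (both drop by the same amount), those where a decremented entry sits as a lower bound are only weakened, and only two families of inequalities remain to verify: (I) $Y_{k,l}<Y_{k+1,l+1}$ in the state just before the step, and (II) $Y_{k+p+1,l+p}<Y_{k+p,l+p}$ in the state just before the step, for each $p\in\{1,\ldots,n-k-1\}$.

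Inequality (I) is almost immediate. Both $Y_{k,l}$ and $Y_{k+1,l+1}$ lie on diagonal $d$, and a decrement to such an entry can only come from a previously processed inversion of $X$ lying strictly SW of it on the same diagonal. For $Y_{k,l}$ the candidates sit on columns $<l$, which have not yet been processed; for $Y_{k+1,l+1}$ the only candidate on column $l$ is $(k,l)$ itself, which is the current (not yet applied) inversion. Hence $Y_{k,l}=X_{k,l}$ and $Y_{k+1,l+1}=X_{k+1,l+1}$, and $(k,l)$ being an inversion gives $X_{k,l}=X_{k+1,l}<X_{k+1,l+1}$, the last strict inequality coming from the strict increase of row $k+1$ of the Gog triangle $X$.

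For (II) I would exploit the explicit shape of a permutation Gog triangle: the $r$-th row is the sorted set $\{\sigma(n-r+1),\ldots,\sigma(n)\}$, so passing from row $r+1$ to row $r$ removes the element $\sigma(n-r)$, whose position in row $r+1$ I denote by $\pi_r$, and the inversions on row $r$ form the prefix $(r,1),\ldots,(r,\pi_r-1)$. Tracking which decrements have already been applied to the two entries of interest, one finds $Y_{k+p,l+p}=X_{k+p,l+p}-A$ and $Y_{k+p+1,l+p}=X_{k+p+1,l+p}-B$, where $A$ counts inversions of $X$ on diagonal $d$ at columns in $\{l+1,\ldots,l+p-1\}$ and $B$ counts inversions of $X$ on diagonal $d+1$ at columns in $\{l,\ldots,l+p-1\}$. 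Since an inversion on diagonal $d$ at column $q$ exists exactly when $\pi_{d+q}>q$, the shift $q\mapsto q-1$ injects the $A$-inversions into the $B$-inversions, so $A\leq B$. If $(k+p,l+p)$ is itself an inversion of $X$ then $\pi_{k+p}>l+p$ produces an additional inversion on diagonal $d+1$ at column $l+p-1$, outside the image of the injection, so $B\geq A+1$, which gives (II) since in that case $X_{k+p,l+p}=X_{k+p+1,l+p}$; otherwise $X_{k+p,l+p}-X_{k+p+1,l+p}\geq 1$ by strict GT at $(k+p,l+p)$, and $A\leq B$ alone suffices.

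The main obstacle is the initial bookkeeping: pinning down exactly which GT inequalities can actually be broken by a single step of the standard procedure, and constructing the injection $q\mapsto q-1$ between inversions on two consecutive SW-NE diagonals. Once this framework is set up, the permutation hypothesis enters in the minimal way, through the strict row-increase used in (I) and through the prefix description of row inversions used in (II).
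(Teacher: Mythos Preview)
Your argument is correct, and since the paper explicitly leaves this proposition as an exercise to the reader, there is no proof in the paper to compare against. Your write-up would serve perfectly well as the omitted proof.

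A few minor remarks. In your discussion of (I), you say at the end that the permutation hypothesis enters ``through the strict row-increase used in (I)''; but strict row-increase is a defining property of \emph{all} Gog triangles, so (I) in fact holds for arbitrary Gog triangles and uses nothing about permutations. The permutation hypothesis is genuinely needed only in (II), via the prefix description of the row inversions, which is exactly how you use it. Also, your phrase ``by strict GT at $(k+p,l+p)$'' in Case~2 of (II) is a slight abuse: the Gelfand--Tsetlin inequality $X_{k+p+1,l+p}\le X_{k+p,l+p}$ is not strict a priori; what you mean (and what is correct) is that non-inversion plus integrality gives $X_{k+p,l+p}-X_{k+p+1,l+p}\ge 1$.

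The key combinatorial content of your proof is the injection $q\mapsto q-1$ from inversions on diagonal $d$ at columns $\{l+1,\dots,l+p-1\}$ to inversions on diagonal $d+1$ at columns $\{l,\dots,l+p-2\}$, justified by $\pi_{d+q}>q\Rightarrow\pi_{d+q}>q-1$. This is clean and is precisely the place where the permutation structure (the prefix shape of inversions on each row) does real work; for a general Gog triangle the inversions on a row need not form a prefix and the injection fails, which is consistent with the fact that not all Gog triangles are admissible.
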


Like in \cite{BC} the bijection between left Gog and GOGAm trapezoids will be obtained by a modification of the Standard Procedure.

\subsubsection{Characterization of $(n,2)$ GOGAm trapezoids}
The family of  inequalities (\ref{GOGAm}) simplifies in the case of $(n,2)$ GOGAm trapezoids, indeed if we identify such a trapezoid with its canonical completion, then most of the terms in the left hand side are zero, so that these inequalities reduce to

\begin{eqnarray}
X_{i,2}&\leqslant& n-i+2\label{GOGAm21}\\
X_{i,2}-X_{i-1,1}+X_{i,1}&\leqslant &n-i+1\label{GOGAm22}
\end{eqnarray}

Remark that,
 since $-X_{i-1,1}+X_{i,1}\leqslant 0$, the inequality (\ref{GOGAm22}) follows from (\ref{GOGAm21}) unless $X_{i-1,1}=X_{i,1}$.
\subsubsection{From Gog to GOGAm}
Let $X$ be a $(n,2)$ left Gog trapezoid. We shall construct a  $(n,2)$ left GOGAm trapezoid $Y$ by scanning the inversions in the leftmost NW-SE diagonal of $X$, starting from NW. Let us denote by $n> i_1>\ldots>i_k\geq 1$ these inversions, so that 
$X_{i,1}=X_{i+1,1}$ if and only if $i\in\{i_1,\ldots,i_k\}$. We also put $i_0=n$. We will  construct  a sequence of $(n,2)$ left Gelfand-Tsetlin
 trapezoids $X=Y^{(0)},Y^{(1)},Y^{(2)},\ldots,Y^{(k)}=Y$. 

Let us  assume that we have constructed the trapezoids up to $Y^{(l)}$,   that 
$Y^{(l)}\leq X$,  that $Y^{(l)}_{ij}= X_{ij} $  for $i\leqslant i_{l} $, and that 
inequalities  (\ref{GOGAm21}) and (\ref{GOGAm22}) are satisfied by $Y^{(l)}$ for $i\geqslant i_l+1$.
This is the case for $l=0$.

Let 
$m$ be the largest integer such that $Y^{(l)}_{m,2}=Y^{(l)}_{i_{l+1}+1,2}$.
We put
$$\begin{array}{rclcl}
 Y^{(l+1)}_{i,1}&=&Y^{(l)}_{i,1}&\text{ for }& n\geqslant i\geqslant m\ \text{and}\ i_{l+1}\geqslant i\\
Y^{(l+1)}_{i,1}&=&Y^{(l)}_{i+1, 1}&\text{ for }& m-1\geqslant i> i_{l+1}\\  
Y^{(l+1)}_{i,2}&=&Y^{(l)}_{i,2}&\text{ for }& n\geqslant i\geqslant m+1\ \text{and}\ i_{l+1}\geqslant i\\
Y^{(l+1)}_{i,2}&=&Y^{(l)}_{i,2}-1&\text{ for }& m\geqslant i\geqslant i_{l+1}+1.
\end{array}$$
From the definition of $m$, and the fact that $X$ is a Gog trapezoid, we see that this new triangle is a Gelfand-Tsetlin triangle,   that 
$Y^{(l+1)}\leq X$, and that $Y^{(l+1)}_{ij}= X_{ij} $  for $i\leqslant i_{l+1}$.
Let us now check that the  trapezoid $Y^{(l+1)}$ satisfies the inequalities  (\ref{GOGAm21}) and (\ref{GOGAm22})
for $i\geqslant i_{l+1}+1$.
The first series of inequalities, for $i\geqslant i_{l+1}+1$, follow from the fact that $Y^{(l)}\leq X$.
For the second series, they are satisfied for $i\geqslant m+1$ since this is the case for $Y^{(l)}$. For $m\geqslant i\geqslant i_{l+1}+1$, observe that $$Y^{(l+1)}_{i,2}-Y^{(l+1)}_{i+1,1}+Y^{(l+1)}_{i+1,1}\leqslant Y^{(l+1)}_{i,2}= Y^{(l+1)}_{m,2}=Y^{(k)}_{m,2}-1\leqslant n-m+1$$  by (\ref{GOGAm21}) for $Y(l)$, from which 
(\ref{GOGAm22}) follows.

This proves that $Y^{(l+1)} $ again satisfies the induction hypothesis. Finally $Y=Y^{(k)}$ is a GOGAm triangle: indeed inequalities (\ref{GOGAm21}) follow again from $Y^{(l+1)}\leqslant X$, and (\ref{GOGAm22}) for $i\leq i_{k}$ follow from the fact that there are no inversions in this range. It follows that
the above algorithm provides a map from $(n,2)$ left Gog trapezoids to 
$(n,2)$ left  GOGAm trapezoids.
Observe that the number of inversions in the leftmost diagonal of $Y$ is the same as for $X$, but the positions of these inversions are not the same in general.
\subsubsection{Inverse map}\label{invert}
We now describe the inverse map, from GOGAm left trapezoids to Gog left trapezoids.

We start from an $(n,2)$ GOGAm left trapezoid $Y$, and construct a sequence 
$$Y=Y^{(k)},Y^{(k-1)},Y^{(k-2)},\ldots, Y^{(0)}=X$$ of intermediate Gelfand-Tsetlin trapezoids.

Let $n-1\geqslant \iota_1>\iota_2\ldots>\iota_k\geqslant 1$ be the inversions of the leftmost diagonal of $Y$, and let $\iota_{k+1}=0$.
 Assume that $Y^{(l)}$ has been constructed and that $Y_{ij}^{(l)}=Y_{ij}$ for $i-j\geqslant \iota_{l+1}$. 
This is the case for $l=k$.

Let $p$ be the smallest integer such that
$Y^{(l)}_{i_{l}+1,2}=Y^{(l)}_{p,2}$. We put
$$\begin{array}{rclcl}
 Y^{(l-1)}_{i,1}&=&Y^{(l)}_{i,1}&\text{ for }& n\geqslant i\geqslant \iota_{l}+1\ \text{and}\ p\geqslant i\\
Y^{(l-1)}_{i,1}&=&Y^{(l)}_{i-1,1}&\text{ for }& \iota_l\geqslant i\geqslant p\\  
Y^{(l-1)}_{i,2}&=&Y^{(l)}_{i,2}&\text{ for }& n\geqslant i\geqslant \iota_l+2\ \text{and}\ p-1\geqslant i\\
Y^{(l-1)}_{i,2}&=&X^{(l)}_{i,2}-1&\text{ for }& \iota_l+1\geqslant i\geqslant p.
\end{array}$$

It is immediate to check that if $X$ is an $(n,2)$ left Gog trapezoid, and $Y$ is its image by the first algorithm then 
the above algorithm applied to $Y$ yields $X$ back, actually the sequence $Y^{(l)}$ is the same. Therefore in order to prove the bijection we only need to show that if 
$Y$ is a $(n,2)$ left GOGAm trapezoid then the algorithm is well defined and $X$ is a Gog left trapezoid. This is a bit cumbersome, but not difficult, and very similar to the opposite case, so we leave this task to the reader.
\subsubsection{The $\alpha$ statistic}\label{stats}
Observe that in our bijection the value of the bottom entry $X_{1,1}$ is unchanged when we go from Gog to GOGAm trapezoids.
The same was true of the bijection in \cite{BC} for right trapezoids. 
Actually we make the following conjecture, which extends Conjecture \ref{conjecture::equipotence::gog::GOGAm::nk::gauche} above.

\begin{conj}
For each $n,k,l$ the
$(n,k)$ left Gog and GOGAm trapezoids with bottom entry $X_{1,1}=l$ are equienumerated.

\end{conj}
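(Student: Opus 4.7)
The plan is to verify the conjecture in the cases where we already possess explicit bijections ($k=1,2$), and to sketch how one might attack the general case.

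For $k=1$, both $(n,1)$ left Gog and GOGAm trapezoids coincide with the same set of nondecreasing sequences $X_{n,1}\leq\cdots\leq X_{1,1}$ with $X_{j,1}\leq n-j+1$, and the identity bijection described at the start of Section~\ref{bij} trivially preserves $X_{1,1}$.

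For $k=2$, I would argue by direct inspection of the forward algorithm. At each step $Y^{(l)}\mapsto Y^{(l+1)}$, the modifications occur only in rows $i>i_{l+1}$, where $i_{l+1}\geq 1$ is an inversion index of the leftmost diagonal (note that $m\geq i_{l+1}+1$ by definition of $m$). The entry at position $(1,1)$ falls into the complementary range $i_{l+1}\geq i$ and therefore satisfies $Y^{(l+1)}_{1,1}=Y^{(l)}_{1,1}$ throughout, so $Y_{1,1}=X_{1,1}$. A symmetric argument works for the inverse algorithm of Section~\ref{invert}: the auxiliary index $p$ there is forced to satisfy $p\geq 2$, since column 2 only exists for $i\geq 2$ and $p$ is defined as the smallest row index at which column 2 attains a specified value. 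Thus row 1 is again untouched, and the $(n,2)$ bijection restricts to a bijection on each fiber $\{X_{1,1}=l\}$.

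For general $k$, the statement refines the still-open Conjecture~\ref{conjecture::equipotence::gog::GOGAm::nk::gauche}, so no unconditional proof is immediately available. The most natural attack is to extend the modified standard procedure to arbitrary $k$ by processing NW-SE diagonals from left to right, with the aim of showing inductively that the bottom vertex $X_{1,1}$ is never modified. The principal obstacle I foresee is that for $k\geq 3$ the GOGAm inequalities (\ref{GOGAm}) couple entries across many diagonals, so a single triggering inversion can produce a cascade of compensating subtractions propagating through the trapezoid; one must control this propagation and prove it never reaches row 1. An alternative route, perhaps more accessible, is to prove the refined statement conditionally on the unrefined equinumeration by means of a peeling argument: fixing $X_{1,1}=l$ and trying to identify both sides with a smaller combinatorial problem of the same type, thereby reducing the refinement to the plain equinumeration by induction on $n$.
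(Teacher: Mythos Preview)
Your treatment matches the paper's own stance: the statement is a \emph{conjecture}, and the paper does not prove it. What the paper does is exactly what you do for $k=1,2$ --- it observes (in the sentence immediately preceding the conjecture) that the bijections constructed in Section~\ref{bij} leave the bottom entry $X_{1,1}$ unchanged, and then formulates the conjecture for general $k$ as an extension of Conjecture~\ref{conjecture::equipotence::gog::GOGAm::nk::gauche}. Your explicit row-by-row argument that position $(1,1)$ lies in the unmodified range $i\le i_{l+1}$ at every step is a welcome elaboration of what the paper states as a bare observation; the paper gives no such justification. Your remarks on the general case (cascading subtractions, a possible peeling reduction) go beyond anything the paper attempts --- the paper offers no strategy for $k\ge 3$ --- so there is nothing to compare there.
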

\subsection{An example}
In this section we work out an example of the algorithm from the Gog trapezoid $X$  to the  GOGAm trapezoid $Y$ by showing the successive trapezoids $Y^{(k)}$.
At each step we indicate the  inversion, as well as the entry covered by this inversion, and the values of the parameters $i_l, p$.
The algorithm also runs backwards to yield the GOGAm$\to$Gog bijection.
\medskip

\begin{tikzpicture}[scale=0.6]
 \tikzstyle{barr}= [opacity=.2,line width=6 mm,cap=round,color=red]
\tikzstyle{barb}= [opacity=.2,line width=6 mm,cap=round,color=blue]
\tikzstyle{barv}= [opacity=.2,line width=6 mm,cap=round,color=violet]
\tikzstyle{barvr}= [opacity=.2,line width=6 mm,cap=round,color=vertex]
\tikzstyle{comp}= [color=vertex]
\tikzstyle{ntrap}=[color=red]
\tikzstyle{inv}=[color=vertex,line width=0.3mm]
\draw[ClasseR](3,10)ellipse(6mm and 6mm);
\node (11)at (1,10){$1$};\node (21)at (2,9) {$1$} ;\node  (31) at (3,8){$1$} ;
\node(41)at  (4,7) {$2$} ;\node (51)at  (5,6) {$3$} ;
\node (61)at  (6,5) {$3$} ;
\node (71)at  (7,4) {$3$} ;
\node (12)at  (3,10) {$2$} ;
\node (22)at  (4,9) {$2$} ;
\node (32)at (5,8){$4$};
\node (42)at (6,7){$4$};
\node(52)at  (7,6) {$4$} ;
\node(62)at  (8,5) {$4$} ;

 \draw[inv](11)--(21);
\node at (12,10){$X=Y^{(0)}$};
\node at (12,8){$i_1=6$};
\node at (12,7){$m=7$};
\end{tikzpicture}

\begin{tikzpicture}[scale=0.6]
 \tikzstyle{barr}= [opacity=.2,line width=6 mm,cap=round,color=red]
\tikzstyle{barb}= [opacity=.2,line width=6 mm,cap=round,color=blue]
\tikzstyle{barv}= [opacity=.2,line width=6 mm,cap=round,color=violet]
\tikzstyle{barvr}= [opacity=.2,line width=6 mm,cap=round,color=vertex]
\tikzstyle{comp}= [color=vertex]
\tikzstyle{inv}=[color=vertex,line width=0.3mm]
\tikzstyle{ntrap}=[color=red]
\draw[ClasseR](4,9)ellipse(6mm and 6mm);
\node (11)at (1,10){$1$};\node (21)at (2,9) {$1$} ;\node  (31) at (3,8){$1$} ;
\node(41)at  (4,7) {$2$} ;\node (51)at  (5,6) {$3$} ;
\node (61)at  (6,5) {$3$} ;
\node (71)at  (7,4) {$3$} ;
\node (12)at  (3,10) {$1$} ;
\node (22)at  (4,9) {$2$} ;
\node (32)at (5,8){$4$};
\node (42)at (6,7){$4$};
\node(52)at  (7,6) {$4$} ;
\node(62)at  (8,5) {$4$} ;

 \draw[inv](21)--(31);
\node at (12,10){$Y^{(1)}$};
\node at (12,8){$i_2=5$};
\node at (12,7){$m=6$};
\end{tikzpicture}

\begin{tikzpicture}[scale=0.6]
 \tikzstyle{barr}= [opacity=.2,line widt Fischer, Ilse; Romik, Dan More refined enumerations of alternating sign matrices. Adv. Math. 222 (2009), no. 6, 2004–2035.h=6 mm,cap=round,color=red]
\tikzstyle{barb}= [opacity=.2,line width=6 mm,cap=round,color=blue]
\tikzstyle{barv}= [opacity=.2,line width=6 mm,cap=round,color=violet]
\tikzstyle{barvr}= [opacity=.2,line width=6 mm,cap=round,color=vertex]
\tikzstyle{comp}= [color=vertex]
\tikzstyle{ntrap}=[color=red]
\tikzstyle{inv}=[color=vertex,line width=0.3mm]
\draw[ClasseR](7,6)ellipse(6mm and 6mm);
\node (11)at (1,10){$1$};\node (21)at (2,9) {$1$} ;\node  (31) at (3,8){$1$} ;
\node(41)at  (4,7) {$2$} ;\node(51)at  (5,6) {$3$} ;
\node (61)at  (6,5) {$3$} ;
\node (71)at  (7,4) {$3$} ;
\node (12)at  (3,10) {$1$} ;
\node (22)at  (4,9) {$1$} ;
\node (32)at (5,8){$4$};
\node (42)at (6,7){$4$};
\node(52)at  (7,6) {$4$} ;
\node(62)at  (8,5) {$4$} ;

 \draw[inv](51)--(61);
\node at (12,10){$Y^{(2)}$};
\node at (12,8){$i_3=2$};
\node at (12,7){$m=5$};
\end{tikzpicture}

\begin{tikzpicture}[scale=0.6]
 \tikzstyle{barr}= [opacity=.2,line width=6 mm,cap=round,color=red]
\tikzstyle{barb}= [opacity=.2,line width=6 mm,cap=round,color=blue]
\tikzstyle{barv}= [opacity=.2,line width=6 mm,cap=round,color=violet]
\tikzstyle{barvr}= [opacity=.2,line width=6 mm,cap=round,color=vertex]
\tikzstyle{comp}= [color=vertex]
\tikzstyle{ntrap}=[color=red]
\tikzstyle{inv}=[color=vertex,line width=0.3mm]
\draw[ClasseR](8,5)ellipse(6mm and 6mm);
\node (11)at (1,10){$1$};\node (21)at (2,9) {$1$} ;\node  (31) at (3,8){$1$} ;
\node(41)at  (4,7) {$1$} ;\node (51)at  (5,6) {$2$} ;
\node (61)at  (6,5) {$3$} ;
\node (71)at  (7,4) {$3$} ;
\node (12)at  (3,10) {$1$} ;
\node (22)at  (4,9) {$1$} ;
\node (32)at (5,8){$3$};
\node (42)at (6,7){$3$};
\node(52)at  (7,6) {$3$} ;
\node(62)at  (8,5) {$4$} ;

 \draw[inv](61)--(71);

\node at (12,10){$Y^{(3)}$};
\node at (12,8){$i_4=1$};
\node at (12,7){$m=2$};
\end{tikzpicture}

\begin{tikzpicture}[scale=0.6]
 \tikzstyle{barr}= [opacity=.2,line width=6 mm,cap=round,color=red]
\tikzstyle{barb}= [opacity=.2,line width=6 mm,cap=round,color=blue]
\tikzstyle{barv}= [opacity=.2,line width=6 mm,cap=round,color=violet]
\tikzstyle{barvr}= [opacity=.2,line width=6 mm,cap=round,color=vertex]
\tikzstyle{comp}= [color=vertex]
\tikzstyle{ntrap}=[color=red]
\tikzstyle{inv}=[color=vertex,line width=0.3mm]
\node (11)at (1,10){$1$};\node (21)at (2,9) {$1$} ;\node  (31) at (3,8){$1$} ;
\node(41)at  (4,7) {$1$} ;\node (51)at  (5,6) {$2$} ;
\node (61)at  (6,5) {$3$} ;
\node (71)at  (7,4) {$3$} ;
\node (12)at  (3,10) {$1$} ;
\node (22)at  (4,9) {$1$} ;
\node (32)at (5,8){$3$};
\node (42)at (6,7){$3$};
\node(52)at  (7,6) {$3$} ;
\node(62)at  (8,5) {$3$} ;

\node at (12,7){$Y^{(4)}=Y$};
\end{tikzpicture}
\section{The $(n,3,3,3)$ pentagons}
As we remarked above, the $(n,3,3,3)$ Gog or GOGAm pentagons are actually Gelfand-Tsetlin triangles composed of positive integers, of the form:
\begin{figure}[ht]
\begin{center}
    \begin{tikzpicture}[scale=.45]
        \begin{scope}
                   \draw  (3,1) node{a} ;\draw  (5,1) node{d} ;\draw  (7,1) node{f} ;
                         \draw  (4,0) node{b} ;\draw  (6,0) node{e} ;
                         \draw  (5,-1) node{c} ;
  \end{scope}
           \end{tikzpicture}
\end{center}
\end{figure}
\smallskip

In order that such a Gelfand-Tsetlin triangle be a $(n,3,3,3)$ Gog pentagon, it is necessary and sufficent that:

$$a<d<f;\quad b<e,\quad
f\leq n.$$

\medskip

For GOGAm pentagons the conditions are:
$$f\leq n;\quad f-e+d\leq n-1;\quad f-c+b\leq n-1;\quad f-e+d-b+a\leq n-2.$$

We will now describe the bijection from Gog to  GOGAm by decomposing the set of Gog pentagons according to the inversion pattern. In the left column we put the different Gog pentagons, on the right the corresponding GOGAm pentagons. There are three possible places for inversions, hence $2^3=8$ cases to consider. In 7 of these cases, the triangle is admissible, and we apply the standard procedure. there is only one case where this procedure does not apply.

Verifying that this table gives a bijection between Gog and GOGAm pentagons of this form is straightforward, but tedious, so we leave this task to the interested reader.

\bigskip

\begin{center}
\begin{tabular}{|c|c|}
\hline
Gog&GOGAm
\\
\hline
\begin{tikzpicture}[scale=.4]
        
                   \draw  (3,1) node{a} ;\draw  (5,1) node{d} ;\draw  (7,1) node{f} ;
                         \draw  (4,0) node{b} ;\draw  (6,0) node{e} ;
                         \draw  (5,-1) node{c} ;

 \end{tikzpicture}
&
 \begin{tikzpicture}[scale=.4]
                   \draw  (3,1) node{a} ;\draw  (5,1) node{d} ;\draw  (7,1) node{f} ;
                         \draw  (4,0) node{b} ;\draw  (6,0) node{e} ;
                         \draw  (5,-1) node{c} ;
 \end{tikzpicture}
\\
\hline

\begin{tikzpicture}[scale=.4]
       
                   \draw  (3,1) node{a} ;\draw  (5,1) node{d} ;\draw  (7,1) node{f} ;
                         \draw  (4,0) node{b} ;\draw  (6,0) node{d} ;
                         \draw  (5,-1) node{c} ;
\draw[inversion] (5.15,.85)--(5.85,.15);
\end{tikzpicture}
&
   \begin{tikzpicture}[scale=.4]
                   \draw  (3,1) node{a} ;\draw  (5,1) node{d} ;\draw  (7,1) node{f-1} ;
                         \draw  (4,0) node{b} ;\draw  (6,0) node{d} ;
                         \draw  (5,-1) node{c} ;

           \end{tikzpicture}
\\
\hline

\begin{tikzpicture}[scale=.4]
       
                   \draw  (3,1) node{a} ;\draw  (5,1) node{d} ;\draw  (7,1) node{f} ;
                         \draw  (4,0) node{a} ;\draw  (6,0) node{e} ;
                         \draw  (5,-1) node{c} ;
\draw[inversion] (3.15,.85)--(3.85,.15);
   \end{tikzpicture}
        &
\begin{tikzpicture}[scale=.4]
                   \draw  (3,1) node{a} ;\draw  (5,1) node{d-1} ;\draw  (7,1) node{f} ;
                         \draw  (4,0) node{a} ;\draw  (6,0) node{e} ;
                         \draw  (5,-1) node{c} ;
   \end{tikzpicture}
\\
\hline
\begin{tikzpicture}[scale=.4]
       
                   \draw  (3,1) node{a} ;\draw  (5,1) node{d} ;\draw  (7,1) node{f} ;
                         \draw  (4,0) node{b} ;\draw  (6,0) node{e} ;
                         \draw  (5,-1) node{b} ;
\draw[inversion] (4.15,-0.15)--(4.85,-0.85);
\end{tikzpicture}
  &
       \begin{tikzpicture}[scale=.4]
                   \draw  (3,1) node{a} ;\draw  (5,1) node{d} ;\draw  (7,1) node{f-1} ;
                         \draw  (4,0) node{b} ;\draw  (6,0) node{e-1} ;
                         \draw  (5,-1) node{b} ;
  
           \end{tikzpicture}

\\
\hline
\end{tabular}
\hskip 1cm
\begin{tabular}{|c|c|}
\hline
Gog&GOGAm
\\
\hline

\begin{tikzpicture}[scale=.4]
       
                   \draw  (3,1) node{a} ;\draw  (5,1) node{d} ;\draw  (7,1) node{f} ;
                         \draw  (4,0) node{b} ;\draw  (6,0) node{d} ;
                         \draw  (5,-1) node{b} ;
\draw[inversion] (4.15,-0.15)--(4.85,-0.85);
\draw[inversion] (5.15,.85)--(5.85,.15);
 \end{tikzpicture}
&
        \begin{tikzpicture}[scale=.4]
                   \draw  (3,1) node{a} ;\draw  (5,1) node{d-1} ;\draw  (7,1) node{f-2} ;
                         \draw  (4,0) node{a} ;\draw  (6,0) node{d-1} ;
                         \draw  (5,-1) node{b} ;
  
           \end{tikzpicture}
\\
\hline
\begin{tikzpicture}[scale=.4]
      
                   \draw  (3,1) node{a} ;\draw  (5,1) node{d} ;\draw  (7,1) node{f} ;
                         \draw  (4,0) node{a} ;\draw  (6,0) node{d} ;
                         \draw  (5,-1) node{c} ;
\draw[inversion] (3.15,.85)--(3.85,.15);
\draw[inversion] (5.15,.85)--(5.85,.15);
   \end{tikzpicture}
&
        \begin{tikzpicture}[scale=.4]
                   \draw  (3,1) node{a} ;\draw  (5,1) node{d-1} ;\draw  (7,1) node{f-1} ;
                         \draw  (4,0) node{a} ;\draw  (6,0) node{d} ;
                         \draw  (5,-1) node{c} ;
  
           \end{tikzpicture}
\\
\hline
 \begin{tikzpicture}[scale=.4]
     
                   \draw  (3,1) node{a} ;\draw  (5,1) node{d} ;\draw  (7,1) node{f} ;
                         \draw  (4,0) node{a} ;\draw  (6,0) node{e} ;
                         \draw  (5,-1) node{a} ;
\draw[inversion] (3.15,.85)--(3.85,.15);
\draw[inversion] (4.15,-0.15)--(4.85,-0.85);
\end{tikzpicture}
&
       \begin{tikzpicture}[scale=.4]
                   \draw  (3,1) node{a} ;\draw  (5,1) node{d-1} ;\draw  (7,1) node{f-1} ;
                         \draw  (4,0) node{a} ;\draw  (6,0) node{e-1} ;
                         \draw  (5,-1) node{a} ;
  
           \end{tikzpicture}
\\
\hline
\begin{tikzpicture}[scale=.4]
       
                   \draw  (3,1) node{a} ;\draw  (5,1) node{d} ;\draw  (7,1) node{f} ;
                         \draw  (4,0) node{a} ;\draw  (6,0) node{d} ;
                         \draw  (5,-1) node{a} ;
\draw[inversion] (4.15,-0.15)--(4.85,-0.85);
\draw[inversion] (5.15,.85)--(5.85,.15);
\draw[inversion] (3.15,.85)--(3.85,.15);
 \end{tikzpicture}
        &
\begin{tikzpicture}[scale=.4]
                   \draw  (3,1) node{a} ;\draw  (5,1) node{d-1} ;\draw  (7,1) node{f-2} ;
                         \draw  (4,0) node{a} ;\draw  (6,0) node{d-1} ;
                         \draw  (5,-1) node{a} ;
  
           \end{tikzpicture}
\\
\hline
\end{tabular}

\end{center}

We have also obtained a bijection between $(n,3,3,5)$ Gog and GOGAm pentagons, along the same lines. However it is too long to be reported here. Note that these pentagons are actually squares as below

\begin{figure}[ht]
\begin{center}
    \begin{tikzpicture}[scale=.45]
        \begin{scope}\draw  (5,3) node{g};
\draw  (4,2) node{d} ;\draw  (6,2) node{h} ;
                   \draw  (3,1) node{a} ;\draw  (5,1) node{e} ;\draw  (7,1) node{i} ;
                         \draw  (4,0) node{b} ;\draw  (6,0) node{f} ;
                         \draw  (5,-1) node{c} ;
  \end{scope}
           \end{tikzpicture}
\end{center}
\end{figure}

\section{On the distribution of inversions and coinversions}

\subsection{}
We recall the definition of inversions (cf Definition \ref{inversions}), and introduce the dual notion of coinversion.
\begin{definition}
\smallskip

An  inversion in a Gog triangle $X$ is a pair $(i,j)$ such that $X_{i,j}=X_{i+1,j}$.

\smallskip

 A coinversion is a pair $(i,j)$ such that $X_{i,j}=X_{i+1,j+1}$.
 
\end{definition}

For example, the  Gog triangle in~\eqref{eq::inversion::coinversion::triangle::monotone} 
contains three inversions, $(2,2)$, $(3,1)$, $(4,1)$ 
and five coinversions, $(3,2)$, $(3,3)$, $(4,2)$, $(4,3)$, $(4,4)$. 
\begin{equation}\label{eq::inversion::coinversion::triangle::monotone}
    \begin{split}
    \begin{tikzpicture}[scale=.5]
       \draw  (1,3) node{1} ;\draw  (3,3) node{2} ;\draw  (5,3) node{3} ;\draw  (7,3) node{4} ;\draw  (9,3) node{5} ;
             \draw  (2,2) node{1} ;\draw  (4,2) node{3} ;\draw  (6,2) node{4} ;\draw  (8,2) node{5} ;
                   \draw  (3,1) node{1} ;\draw  (5,1) node{4} ;\draw  (7,1) node{5} ;
                         \draw  (4,0) node{2} ;\draw  (6,0) node{4} ;
                         \draw  (5,-1) node{3} ;
  \draw[inversion] (1.9,2.2)--(1.1,2.85);\draw[inversion] (2.9,1.2)--(2.1,1.85);
  \draw[inversion] (5.9,.2)--(5.1,.85);
  \draw[coinversion](8.1,2.15)--(8.9,2.85);\draw[coinversion](7.1,1.15)--(7.9,1.85);
  \draw[coinversion](5.1,1.15)--(5.9,1.85);\draw[coinversion](6.1,2.15)--(6.9,2.85);
  \draw[coinversion](4.1,2.15)--(4.9,2.85);
  \end{tikzpicture}
    \end{split}
 \end{equation}

We denote by $\mu(X)$ (resp. $\nu(X)$) the number of inversions (resp. coinversions) of a Gog triangle $X$. Since a pair $(i,j)$ cannot be an inversion and a coinversion at the same time in a Gog triangle, one has
$$\nu(X)+\mu(X)\leq\frac{n(n-1)}{2}$$
Actually one can easily see that 
$\frac{n(n-1)}{2}-\nu(X)-\mu(X)$ is the number of $-1$'s in the alternating sign matrix associated to the Gog triangle $X$. Also inversions and coinversions correspond to different types of vertices in the six vertex model, see e.g. \cite{BDFZJ12}.

 Table~\ref{tab:inv::coinv::3::4} below shows the joint distribution of $\mu$ and $\nu$,
  for~$n=3\mbox{ and } n=4$.
\begin{table}[h!]
\centering
\subtable[]{
\centering
\begin{tabular}{c|cccc|}
 &0&1&2&3\\
 \hline
0& & & &1\\
1& &1&2&\\
2& &2&&\\
3&1 &&&\\
\hline
\end{tabular}
\label{tab:inv::coinv::3}
}
$\quad$
\subtable[]{
\centering
\begin{tabular}{c|ccccccc|}
&0&1&2&3&4&5&6\\
\hline
0& & & & & & &1\\
1& & & &1&2&3&\\
2& & & &6&5& &\\
3& &1&6&6& & &\\
4& &2&5& & & &\\
5& &3& & & & &\\
6&1& & & & & &\\
\hline
\end{tabular}
\label{tab:inv::coinv::4}
}
\caption{The number of Gog triangles of size~$3$ (a) and~$4$ (b) with~$k$ 
inversions (horizontal values) and~$l$ coinversions (vertical values).}
\label{tab:inv::coinv::3::4}
\end{table}

The distribution is symmetric, this follows from the symmetry (\ref{reflection}). Also
we remark that the numbers on the antidiagonal are the Mahonian numbers counting permutations according to the number of their inversions.

Let us denote by~$Z(n,x,y)$  the generating function 
 of Gog triangles of size $n$ according to $\nu$ 
and $\mu$.
\begin{equation}
 Z(n,x,y)=\sum_{X\in \mathcal X_{n}}x^{\nu(X)}y^{\mu(X)}.
\end{equation}
\smallskip

The following formula has been proved in \cite{BDFZJ12}, using properties of the six vertex model.
 \begin{prop}
  
 \begin{equation}
 Z(n,x,y) =
\det_{0\le i,j\le n-1}\left(-y^i\delta_{i,j+1} +
\sum_{k=0}^{\min(i,j+1)}{i-1\choose i-k}{j+1\choose k}x^k\right).  
 \end{equation}
\end{prop}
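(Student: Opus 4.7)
The plan is to translate the generating function $Z(n,x,y)$ into the partition function of the six vertex model with domain wall boundary conditions (DWBC), and then to apply the Izergin--Korepin determinantal formula, specialized to the homogeneous point encoding our chosen Boltzmann weights.

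First, using the bijection recalled in Section~\ref{sec.gog} between Gog triangles and alternating sign matrices, and then the classical bijection between ASMs and six vertex configurations with DWBC on an $n\times n$ grid, each Gog triangle $X$ of size $n$ corresponds to an arrow configuration in which every vertex realizes one of the six admissible local patterns. A short case analysis on the partial sum matrix $\bar M$ shows that inversions of $X$ are in bijection with one specific vertex type, coinversions with another, and the $-1$ entries of the associated ASM with a third type; the remaining three types exhaust the rest. This in particular recovers the already observed identity $\mu(X)+\nu(X)+\#\{-1\text{'s}\}=\binom{n}{2}$.

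Next, I would assign Boltzmann weight $y$ to inversion-type vertices, weight $x$ to coinversion-type vertices, and weight $1$ to the remaining four types, so that the weighted DWBC partition function equals exactly $Z(n,x,y)$. This is a homogeneous specialization of the more general inhomogeneous six vertex model whose partition function is given by Izergin--Korepin as an explicit determinant in the spectral parameters $(u_i,v_j)$; the bulk of the proof is then to carry out the homogeneous limit of that determinant.

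The main obstacle is this final step: the Izergin--Korepin determinant degenerates as a $0/0$ expression in the homogeneous limit, so one must expand both the prefactor and the matrix entries in Taylor series around the homogeneous point and exploit multilinearity of the determinant to extract a finite answer. Recognizing the resulting Taylor coefficients in the closed form $\sum_{k=0}^{\min(i,j+1)}\binom{i-1}{i-k}\binom{j+1}{k}x^k$, together with the shift term $-y^i\delta_{i,j+1}$ coming from the off-diagonal structure of the Izergin--Korepin matrix, requires a careful choice of row and column normalizations. Once the correct normalizations are identified, the identification should be essentially automatic, but finding them is the technically delicate part of the argument.
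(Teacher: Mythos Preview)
The paper does not give its own proof of this proposition: it simply states that the formula ``has been proved in \cite{BDFZJ12}, using properties of the six vertex model'' and moves on. Your proposal is precisely an outline of that six-vertex/Izergin--Korepin derivation, and the paper even confirms the key ingredient you rely on, namely that ``inversions and coinversions correspond to different types of vertices in the six vertex model'' (just before Table~1). So your approach is not merely consistent with the paper---it is the approach the paper attributes to the cited reference, fleshed out in more detail than the paper itself provides.

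One small caution on the weight assignment: saying the remaining four vertex types all get weight~$1$ is a slight oversimplification of what happens in \cite{BDFZJ12}. To land on exactly this determinant (with the $-y^i\delta_{i,j+1}$ shift and the $\sum_k\binom{i-1}{i-k}\binom{j+1}{k}x^k$ entries), one has to choose the six weights $a_1,a_2,b_1,b_2,c_1,c_2$ and the spectral parameters so that, after the homogeneous limit and a suitable row/column rescaling, the Izergin--Korepin matrix reduces to this precise form; in particular the two $c$-type vertices (carrying the $\pm1$'s of the ASM) are not both set to~$1$ in the intermediate computation, even though their contribution ultimately disappears from the final $(x,y)$-dependence via the identity $\mu+\nu+\#\{-1\text{'s}\}=\binom{n}{2}$. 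This is exactly the ``careful choice of row and column normalizations'' you flag as the delicate step, so you have correctly located where the work lies.
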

 For example, for Gog triangles of size $3$, we have
 \begin{equation}
 Z(3,x,y)=\det\begin{pmatrix}1&1&1\\ -y+x&2x&3x\\x&-y^2+2x+x^2&3x+3x^2\end{pmatrix} =x^3+xy+y^3+2x^2y+2xy^2.
\end{equation}
which matches part $(a)$ of Table 1.

It is however not so easy to use this formula in order to prove results on the distribution of inversion and coinversions.

\subsection{}
Let us denote by $A_{n,k}$ the set of pairs of nonnegative integers $(i,j)$ such that $$i\geq \frac{k(k+1)}{2}, \qquad
j\geq \frac{(n-k-1)(n-k)}{2},\qquad i+j\leq \frac{n(n-1)}{2}$$ and let
$$A_n=\cup_{k=0}^{n-1}A_{n,k}.$$

 We will give a simple combinatorial proof of the following.
\begin{theorem}\label{diamond}
There exists a Gog triangle with $i$ inversions and $j$ coinversions if and only if $(i,j)$ belongs to the set $A_n$. If $i=\frac{k(k+1)}{2}$ and 
$j=\frac{(n-k-1)(n-k)}{2}$ for some $k\in [0,n-1]$ then this triangle is unique, furthermore its bottom value is $n-k$.
\end{theorem}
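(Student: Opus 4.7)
The plan is to combine an explicit construction of the corner triangles with a row-by-row reduction for the necessity direction. For each $k \in \{0, \dots, n-1\}$, I would define the candidate corner triangle $T_k$ by prescribing its leftmost NW-SE column $X_{i,1} = \max(1, n-k-i+1)$ and its rightmost SW-NE diagonal $X_{i,i} = \min(n, n-k+i-1)$; the interior entries are then uniquely forced by the Gelfand--Tsetlin inequalities and the strict-row condition. Using the per-diagonal bounds
\[
\#\{\text{inversions on NW-SE diagonal }j\} \geq n - X_{j,j}, \qquad \#\{\text{coinversions on SW-NE diagonal }r\} \geq X_{r+1,1} - 1,
\]
which hold for any Gog triangle and are tight exactly when the successive differences along the diagonal are all $0$ or $1$, one verifies that $T_k$ attains equality on every diagonal and therefore has $\mu(T_k) = k(k+1)/2$ inversions, $\nu(T_k) = (n-k-1)(n-k)/2$ coinversions, and apex $n-k$.

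For achievability of any other $(i,j) \in A_{n,k}$, I would start from $T_k$ and apply local modifications: each decrement $X_{a,b} \to X_{a,b}-1$ at a position allowed by the Gog constraints converts a ``neither'' cell into an inversion or a coinversion, and by orchestrating such moves carefully one can raise $\mu$ or $\nu$ by one at a time to reach any target in $A_{n,k}$ (intersected with the bound $i+j \leq n(n-1)/2$).

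The main obstacle is the necessity direction and the uniqueness at corners. I would attempt an induction on $n$, exploiting the observation that row $n-1$ of any Gog triangle of size $n$ must have the form $(1, 2, \dots, p, p+2, \dots, n)$ for some $p \in \{0, \dots, n-1\}$: this forces the bottom row-transition to contribute exactly $p$ inversions and $n-1-p$ coinversions, with no ``neither'' cell appearing on row $n-1$. Passing to the sub-triangle on rows $1$ through $n-1$, after relabeling entries greater than $p$ to close the gap left at value $p+1$, should produce a Gog triangle of size $n-1$, and combining the inductive hypothesis with the fixed bottom-transition contribution yields $(\mu, \nu) \in A_n$. At a corner, the extremality of $(\mu, \nu)$ forces both $p$ and the sub-triangle to be themselves extremal, giving $X = T_k$ and the apex formula by induction. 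The delicate part is controlling the relabeling step when rows above row $n-1$ contain both $p$ and $p+1$: a careful case analysis (or an alternative induction on the apex value $X_{1,1}$, using that its minimum over Gog triangles with given $(\mu,\nu)$ is $n-k$) will likely be required to make the reduction well-defined.
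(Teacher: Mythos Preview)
Your existence argument is essentially the paper's: an explicit corner triangle $T_k$ (your formula agrees with the paper's (\ref{trdiamond})), followed by local moves to reach any other $(i,j)\in A_{n,k}$. That part is fine.

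The real gap is in the necessity and uniqueness direction. Your single ``relabeling'' reduction cannot carry the induction. First, as you notice, subtracting $1$ from every entry exceeding $p$ does not in general yield a Gog triangle: whenever $p$ and $p{+}1$ occur in the same row of $PX$, the strict-row condition is destroyed. More importantly, any repair of this step that does produce a Gog triangle of size $n-1$ will fail to preserve \emph{both} statistics exactly. The paper defines a ``left standardization'' $LX$ that always gives a Gog triangle of size $n-1$; it has exactly $m-(n-1-k)$ coinversions, but its inversion count is only known to be $\leq l$ (it can exceed the $l-k$ inversions of $PX$ by as much as $k$). With only this information one handles the case $k\le p$ of the inductive step but not $k>p$. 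Symmetrically, a ``right standardization'' $RX$ controls inversions exactly ($l-k$) and bounds coinversions by $m$; this handles $k\ge q+1$ but not $k\le q$. The paper's key idea, which your proposal is missing, is to apply \emph{both} standardizations to $X$ and combine the two pieces of partial information (parameters $p$ from $LX$ and $q$ from $RX$) via a short case analysis on the trichotomy $p>q$, $p<q$, $p=q$; in each case one of $A_{n,p}$, $A_{n,p+1}$, $A_{n,q+1}$ contains $(l,m)$. The uniqueness at corner values then follows by the same double reduction: extremality of $(l,m)$ forces $k=p$ and pins down $RX$ (and $LX$) uniquely by induction, hence $X$.

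Your suggested fallbacks (an ad hoc case analysis on rows containing both $p$ and $p{+}1$, or an induction on the apex $X_{1,1}$) are unlikely to close the gap, because the obstruction is not the well-definedness of the reduction but the loss of exact control over one of the two statistics. The per-diagonal inequalities you state are correct and do give the right lower bounds on $\mu$ and $\nu$ individually, but they are too coarse to locate the pair $(\mu,\nu)$ in $A_n$; the two-sided standardization is what supplies the missing coupling.
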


\begin{Remarque}\label{pp+1}
Note, for future reference, that if $(l,m)$ belongs to the set $A_n$ and if
$l<\frac{p(p+1)}{2}$ then $m\geq\frac{(n-p)(n-p+1)}{2}$.
\end{Remarque}
\subsection{Proof of Theorem \ref{diamond}}
\subsubsection{Existence}
First we show that there exists a triangle of size $n$ with 
$i=\frac{k(k+1)}{2}$ inversions and 
$j=\frac{(n-k-1)(n-k)}{2}$ coinversions. Indeed the triangle is defined by
\begin{eqnarray}\label{trdiamond}
X_{ij}&=&j\quad\text{for}\quad j\leq  i-n+k\\
X_{ij}&=&n+j-i\quad\text{for}\quad j\geq k+1\\
X_{ij}&=&n-k+2j-i-1\quad\text{for}\quad i-n+k+1\leq j\leq k
\end{eqnarray}

The bottom entry of this triangle is $n-k$, as expected.

\smallskip

We give an example below: for $n=6$ and $k=3$, the triangle has $6$ inversions and $3$ coinversions:

\begin{center}
    \begin{tikzpicture}[scale=.45]

\draw[inversion](0.15,3.85)--(0.85,3.15);\draw[inversion](2.15,3.85)--(2.85,3.15);
\draw[inversion](4.15,3.85)--(4.85,3.15);
    \draw[inversion](1.15,2.85)--(1.85,2.15);\draw[inversion](2.15,1.85)--(2.85,1.15);
    \draw[inversion](3.15,2.85)--(3.85,2.15);
    \draw[coinversion](9.85,3.85)--(9.15,3.15);\draw[coinversion](8.85,2.85)--(8.15,2.15);
    \draw[coinversion](7.85,3.85)--(7.15,3.15);
 \draw  (0,4) node{1} ;\draw  (2,4) node{2} ;\draw  (4,4) node{3} ;\draw  (6,4) node{4} ;\draw  (8,4) node{5} ;\draw  (10,4) node{6} ;
    \draw  (1,3) node{1} ;\draw  (3,3) node{2} ;\draw  (5,3) node{3} ;\draw  (7,3) node{5} ;\draw  (9,3) node{6} ;
             \draw  (2,2) node{1} ;\draw  (4,2) node{2} ;\draw  (6,2) node{4} ;\draw  (8,2) node{6} ;
                   \draw  (3,1) node{1} ;\draw  (5,1) node{3} ;\draw  (7,1) node{5} ;
                         \draw  (4,0) node{2} ;\draw  (6,0) node{4} ;
                         \draw  (5,-1) node{3} ;

  \end{tikzpicture}
\end{center}

Observe that the entries which are neither inversions nor coinversions form a rectangle of size $k\times (n-k-1)$ at the bottom of the triangle.

The ASM corresponding to such a triangle has a diamond shape:

$$\begin{pmatrix}
0&0&0&1&0&0\\
0&0&1&-1&1&0\\
0&1&-1&1&-1&1\\
1&-1&1&-1&1&0\\
0&1&-1&1&0&0\\
0&0&1&0&0&0
\end{pmatrix}
$$

Starting from this triangle, it is not difficult, for a pair of integers $(l,m)$ such that
$l\geq \frac{k(k+1)}{2}, m\geq \frac{(n-k-1)(n-k)}{2}$, and $l+m\leq \frac{n(n-1)}{2}$, to construct (at least) one triangle with $l$ inversions and $m$ coinversions, for example one can add inversions by decreasing some entries, starting from the  westmost corner   of the rectangle, and add coinversions by increasing entries, starting from the eastmost corner. Here is an example with $n=6, l=9,m=5$, details of the general case are left to the reader.
\begin{figure}[ht]
\begin{center}
    \begin{tikzpicture}[scale=.45]

    \begin{scope}
\draw[inversion](0.15,3.85)--(0.85,3.15);\draw[inversion](2.15,3.85)--(2.85,3.15);
\draw[inversion](4.15,3.85)--(4.85,3.15);
    \draw[inversion](1.15,2.85)--(1.85,2.15);\draw[inversion](2.15,1.85)--(2.85,1.15);
    \draw[inversion](3.15,2.85)--(3.85,2.15);
\draw[inversion](4.15,1.85)--(4.85,1.15);
\draw[inversion](3.15,0.85)--(3.85,0.15);
\draw[inversion](4.15,-0.15)--(4.85,-0.85);
    \draw[coinversion](9.85,3.85)--(9.15,3.15);\draw[coinversion](8.85,2.85)--(8.15,2.15);
    \draw[coinversion](7.85,3.85)--(7.15,3.15);
  \draw[coinversion](6.85,2.85)--(6.15,2.15);
  \draw[coinversion](7.85,1.85)--(7.15,1.15);
 \draw  (0,4) node{1} ;\draw  (2,4) node{2} ;\draw  (4,4) node{3} ;\draw  (6,4) node{4} ;\draw  (8,4) node{5} ;\draw  (10,4) node{6} ;
    \draw  (1,3) node{1} ;\draw  (3,3) node{2} ;\draw  (5,3) node{3} ;\draw  (7,3) node{5} ;\draw  (9,3) node{6} ;
             \draw  (2,2) node{1} ;\draw  (4,2) node{2} ;\draw  (6,2) node{5} ;\draw  (8,2) node{6} ;
                   \draw  (3,1) node{1} ;\draw  (5,1) node{2} ;\draw  (7,1) node{6} ;
                         \draw  (4,0) node{1} ;\draw  (6,0) node{4} ;
                         \draw  (5,-1) node{1} ;
  \end{scope}

  \end{tikzpicture}
\end{center}

\end{figure}
\smallskip

\subsubsection{Operations on Gog triangles: projection}
\label{sec::operation}
In order to prove the only if part  of the Theorem, as well as the uniqueness statement, we now introduce two {\em standardization} operations.
These operations  build a Gog triangle of size~$n-1$ 
from a Gog triangle of size~$n$. We start by defining a {\sl projection}.

\smallskip

Given a Gog triangle~$X$, denote by
$PX$ 
the Gelfand-Tsetlin triangle of size $n-1$ obtained from $X$ 
by cutting its top row, e.g.

\begin{center}
    \begin{tikzpicture}[scale=.45]
    \begin{scope}       
      \draw  (2,2) node{1} ;\draw  (4,2) node{2} ;\draw  (6,2) node{3} ;\draw  (8,2) node{4} ;
         \draw  (3,1) node{1} ;\draw  (5,1) node{2} ;\draw  (7,1) node{3} ;
            \draw  (4,0) node{2} ;\draw  (6,0) node{3} ;
	      \draw  (5,-1) node{2} ;
    \end{scope}
    \begin{scope}[xshift=8.5cm]
    \draw[alors](0,.5)--(4,.5);     
    \end{scope}
     \begin{scope}[xshift=10cm,yshift=.5cm]
        \draw  (3,1) node{1} ;\draw  (5,1) node{2} ;\draw  (7,1) node{3} ;
            \draw  (4,0) node{2} ;\draw  (6,0) node{3} ;
	      \draw  (5,-1) node{2} ;
	      %
    \end{scope}   
         \end{tikzpicture}                    
\end{center}

\subsubsection{Left standardization}
Let $X$ be a Gog triangle of size $n$, and $PX$ its projection, which is a Gelfand-Tsetlin triangle of size $n-1$, with upper row of the form
$1,2,\ldots,k,k+2,\ldots,n$ for some $k\in[1,n]$.
For $j\leq k$, let ~$m_j$  be the smallest integer such that~$PX_{n-1,j}=PX_{m_j,j}=j$.
 
The left standardization of $X$ is   the triangle $LX$  of size~$n-1$ obtained as follows:

\begin{eqnarray}
LX_{i,j}&=&PX_{i,j}=j\quad\text{ for } \quad  j\leq k \quad\text{ and }\quad i\geq m_j.\\
LX_{i,j}&=&PX_{i,j}-1\quad\text{ for other values of $i,j$}.
\end{eqnarray}

\smallskip

\subsubsection{Right standardization}
Let $X$ be a Gog triangle of size $n$, let $PX$ be its projection, with upper row of the form
$1,2,\ldots,k,k+2,\ldots,n$, and for $j\geq k+1$
let ~$p_j\geq 1$  be the largest integer such that~$PX_{n-1,j}=PX_{n-p_j,j+1-p_j}=j+1$.

The right standardization of $X$ is   the triangle $RX$ of size~$n-1$ obtained as follows:

\begin{eqnarray}
\label{eq::stdl2}
RX_{n-l,j+1-l}&=&PX_{n-l,j+1-l}-1=j\quad\text{ for } \quad  j\geq k+1 \quad\text{ and }\quad 1\leq l\leq p_j.\\
RX_{i,j}&=&PX_{i,j}\quad\text{ for other values of i,j}.
\end{eqnarray}

\smallskip

The right standardization of $X$ can be obtained by reflecting $X$ vertically, according to 
(\ref{reflection}), applying left standardization, and then reflecting vertically again.

Below are a Gog triangle of size 6,
  together with  its left and right standardizations:
\bigskip
\begin{center}
 \begin{tikzpicture}[scale=.4]

\draw  (0,4) node{1} ;\draw  (2,4) node{2} ;\draw  (4,4) node{3} ;\draw  (6,4) node{4} ;\draw  (8,4) node{5} ;\draw (10,4) node{6} ;
     \draw  (1,3) node{1} ;\draw  (3,3) node{2} ;\draw  (5,3) node{3} ;\draw  (7,3) node{5} ;\draw  (9,3) node{6} ;
      \draw  (2,2) node{1} ;\draw  (4,2) node{3} ;\draw  (6,2) node{5} ;\draw  (8,2) node{6} ;
       \draw  (3,1) node{1} ;\draw  (5,1) node{4} ;\draw  (7,1) node{6} ;
        \draw  (4,0) node{3} ;\draw  (6,0) node{5} ;
         \draw  (5,-1) node{4} ;
                     \draw(5,-3)node{$X$};
       
         \draw  (10.5,1) node{,} ;
  \end{tikzpicture}
\end{center}
\begin{center}
    \begin{tikzpicture}[scale=.4]
    \begin{scope}
    \draw[ClasseR,rotate=45](2.8,0)ellipse(6mm and 22mm);
    \draw[ClasseR](3,3)ellipse(6mm and 6mm);
    \draw[ClasseR](5,3)ellipse(6mm and 6mm);
    \draw[ClasseB,rotate=-45](4.3,7)ellipse(6mm and 22mm);
    \draw[ClasseB,rotate=-45](2.8,6.4)ellipse(6mm and 16mm);
     \draw  (1,3) node{1} ;\draw  (3,3) node{2} ;\draw  (5,3) node{3} ;\draw  (7,3) node{5} ;\draw  (9,3) node{6} ;
      \draw  (2,2) node{1} ;\draw  (4,2) node{3} ;\draw  (6,2) node{5} ;\draw  (8,2) node{6} ;
       \draw  (3,1) node{1} ;\draw  (5,1) node{4} ;\draw  (7,1) node{6} ;
        \draw  (4,0) node{3} ;\draw  (6,0) node{5} ;
         \draw  (5,-1) node{4} ;
                     \draw(5,-3)node{$PX$};
         \draw  (10.5,1) node{,} ;
  \end{scope}
  \begin{scope}[xshift=11cm]
  \draw[color=red]  (1,3) node{1} ;\draw[color=red]  (3,3) node{2} ;\draw[color=red]  (5,3) node{3} ;\draw  (7,3) node{4} ;\draw  (9,3) node{5} ;
             \draw[color=red]  (2,2) node{1} ;\draw  (4,2) node{2} ;\draw  (6,2) node{4} ;\draw  (8,2) node{5} ;
                   \draw[color=red]  (3,1) node{1} ;\draw  (5,1) node{3} ;\draw  (7,1) node{5} ;
                         \draw  (4,0) node{2} ;\draw  (6,0) node{4} ;
                         \draw  (5,-1) node{3} ;
                         \draw(5,-3)node{$LX$};
         \draw  (10.5,1) node{,} ;
  \end{scope}
    \begin{scope}[xshift=22cm]
    \draw  (1,3) node{1} ;\draw  (3,3) node{2} ;\draw  (5,3) node{3} ;\draw[color=blue]  (7,3) node{4} ;
\draw[color=blue]  (9,3) node{5} ;
             \draw  (2,2) node{1} ;\draw  (4,2) node{3} ;\draw[color=blue]   (6,2) node{4} ;\draw[color=blue]   (8,2) node{5} ;
                   \draw  (3,1) node{1} ;\draw   (5,1) node{4} ;\draw[color=blue]   (7,1) node{5} ;
                         \draw  (4,0) node{3} ;\draw   (6,0) node{5} ;
                         \draw  (5,-1) node{4} ;
                         \draw(5,-3)node{$RX$};
  \end{scope}
  \end{tikzpicture}
    \end{center}

\subsubsection{}
\begin{prop}
Let $X$ be a Gog triangle of size $n$, with $l$ inversions and $m$ coinversions, and such that the top row of $PX$ is $1,2,3,\ldots,k,k+2,\ldots n$.

 Then $LX$
   is a Gog triangle of size $n-1$ with    $m-n+k+1$
coinversions and at most $l$  inversions.

Similarly, $RX$ is a Gog triangle of size $n-1$ with  $l-k$  inversions   and at most  $m$
coinversions.
\end{prop}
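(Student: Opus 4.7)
The plan is to establish the four claims (Gog property of $LX$, coinversion count, inversion bound, and the analogous statement for $RX$) in four coordinated steps. First, I would verify that $LX$ is a Gog triangle of size $n-1$. The top row is immediate: for $j \leq k$, $LX_{n-1,j} = j$ since $m_j \leq n-1$ by its definition; for $j > k$, $LX_{n-1,j} = PX_{n-1,j} - 1 = (j+1) - 1 = j$. The Gelfand--Tsetlin inequalities and the strict row-increase follow by case analysis on whether each of the entries $(i,j)$, $(i+1,j)$, $(i,j+1)$, $(i+1,j+1)$ lies in the ``kept region'' (where $j \leq k$ and $i \geq m_j$, so $LX_{i,j} = j$) or the ``subtracted region'' (where $LX_{i,j} = PX_{i,j} - 1$). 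Within either region the inequalities inherit from those of $PX$; at the boundary, the argument uses the defining property $PX_{m_j-1,j} > j$ together with the Gelfand--Tsetlin inequality $PX_{i,j} \leq PX_{i+1,j+1}$.

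Next, I would count the coinversions of $LX$. The coinversions of $X$ lying in row $n-1$ are exactly the $n-1-k$ positions $(n-1,j)$ with $j \geq k+1$ (where $X_{n-1,j}=j+1=X_{n,j+1}$), so $PX$ has $m - (n-1-k) = m-n+k+1$ coinversions. A coinversion of $PX$ at $(i,j)$ with $j \leq k$ falls into three configurations: both entries in the kept region (impossible, since two kept entries in adjacent columns take the distinct values $j$ and $j+1$), both in the subtracted region (preserved, since both sides lose $1$), or straddling the boundary. The boundary case requires careful use of the interplay between $m_j$ and $m_{j+1}$, combined with the Gelfand--Tsetlin structure of $PX$; once handled, the count of coinversions of $LX$ matches the claimed value $m-n+k+1$.

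For the inversion bound, each inversion of $LX$ at $(i,j)$ comes from an equality $LX_{i,j} = LX_{i+1,j}$, and a short case analysis on the kept/subtracted status of the two entries produces an inversion of $X$ (either the same position viewed in $PX$, or a neighboring position in the kept column $j$ where consecutive entries all equal $j$). Verifying that this association is injective shows that $LX$ has at most as many inversions as $X$.

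Finally, the statement about $RX$ follows from the identity $RX = \widetilde{L\tilde X}$ recalled earlier: the vertical reflection (\ref{reflection}) swaps inversions with coinversions and replaces $k$ by $n-k-1$, so applying the statement for $LX$ to the triangle $\tilde X$ directly yields the statement for $RX$. The main obstacle throughout is the boundary case analysis in the first two steps, where the fine structure of the sequence $(m_j)$ governs both the Gog property and the precise coinversion count.
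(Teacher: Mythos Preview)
Your approach closely parallels the paper's own proof. Both begin by observing that row $n-1$ of $X$ carries exactly $k$ inversions and $n-k-1$ coinversions (so that $PX$ has $l-k$ inversions and $m-n+k+1$ coinversions), then verify the Gog property of $LX$ by a kept/subtracted case analysis, read off the coinversion count from that same analysis, and finally bound the inversions.

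There are two minor differences worth noting. For the inversion bound, the paper argues that in passing from $PX$ to $LX$ the number of inversions can increase by at most one in each of the $k$ leftmost NW--SE diagonals (namely at the boundary position $(m_j-1,j)$), giving at most $(l-k)+k=l$ inversions in $LX$. Your proposed injection from inversions of $LX$ into inversions of $X$ amounts to the same thing once you send the single ``new'' inversion in diagonal $j\leq k$ to the row-$(n-1)$ inversion $(n-1,j)$ of $X$; the paper's phrasing is perhaps a little more direct. For $RX$, the paper simply says the proof is analogous, whereas you invoke the reflection identity $RX=\widetilde{L\tilde X}$; both are legitimate, and your route is slightly more economical.

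The point you flag as the ``main obstacle'' --- the boundary configuration in the coinversion analysis where one of $(i,j)$, $(i-1,j-1)$ lies in the kept region and the other in the subtracted region --- is precisely where the paper's proof is most terse. The paper explicitly treats only the case where $(i,j)$ is subtracted and $(i-1,j-1)$ is kept (there one gets $PX_{i,j}>j$, $PX_{i-1,j-1}=j-1$, hence a strict inequality in both $PX$ and $LX$), and from this asserts that $LX$ has the same coinversions as $PX$. You are right to anticipate that the opposite boundary case (where $(i,j)$ is kept and $(i-1,j-1)$ subtracted, forcing $m_{j-1}=i$) also needs attention; this is exactly where the fine structure of $(m_j)$ enters.
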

\begin{proof}
 We prove the proposition only for the left standardization. 
 The case of right standardization can be proven in an analogous way.
 \smallskip
 
 Observe first that there are exactly $k$ inversions and $n-k-1$ coinversions on row $n-1$ of $X$. It follows that $PX$ has $l-k$ inversions and $m-n+k+1$ coinversions.

\smallskip

We first prove that $LX$ is a Gog triangle. For any $i,j$ we have to prove that 
$$LX_{i,j}\geq LX_{i-1,j-1}, LX_{i,j}\geq X_{i+1,j},LX_{i,j}> LX_{i,j-1}. $$
Since $PX$ comes from a Gog triangle one has $PX_{i,j}\geq PX_{i-1,j-1}$, therefore the first  inequality may fail only if 
$LX_{i,j}=PX_{i,j}-1$ and $LX_{i-1,j-1}=PX_{i-1,j-1}$. If this is the case then
$PX_{i,j}>j$ and 
$PX_{i-1,j-1}=j-1$, therefore $LX_{i,j}> LX_{i-1,j-1}$. This shows also   that 
 $LX$ has the same coinversions as $PX$, so their number is $m-n+k+1$. 
A similar reasoning yields the other inequalities, moreover the number of inversions of $LX$ can increase at most by $k$ with respect to that of $PX$  (more precisely by at most one in each of the $k$ leftmost NW-SE diagonals). 
\end{proof}

\subsubsection{}
We can now finish the proof of Theorem \ref{diamond} by induction on $n$. For $n=3$ or $4$, the claim follows by inspection of Table 1.
Let $X$ be a Gog triangle of size $n$, with $l$ inversions and $m$ coinversions. 
We have to prove that $(l,m)$ belongs to some $A_{n,r}$.
We have seen that 
$LX$ is a Gog triangle of size $n-1$ with $m-n+k+1$ coinversions and at most $l$ inversions, whereas $RX$ is a Gog triangle of size $n-1$ with $l-k$ inversions at most $m$ coinversions.
By induction hypothesis there exists some $p$ such that 
\begin{equation}\label{pp}l\geq \frac{p(p+1)}{2},\qquad m-n+k+1\geq \frac{(n-p-2)(n-p-1)}{2}
\end{equation}
 and there exists $q$ such that
\begin{equation}\label{qq}l-k\geq \frac{q(q+1)}{2},\qquad m\geq \frac{(n-q-2)(n-q-1)}{2}.\end{equation}

If $p>q$, then (\ref{pp}) implies $l\geq \frac{(q+1)(q+2)}{2}$
and since $m\geq \frac{(n-q-2)(n-q-1)}{2}$ by (\ref{qq}) one has 
$(l,m)\in A_{n,q+1}$. 

Similarly if $q<p$ then 
(\ref{qq}) implies $m\geq \frac{(n-p-1)(n-p)}{2}$  and $l\geq \frac{p(p+1)}{2}$
 by (\ref{pp}) so that  
 $(l,m)\in A_{n,p}$.

If now $p=q$ then either $k> p$ and then $l-k\geq \frac{q(q+1)}{2}$
 implies  $l\geq \frac{(p+1)(p+2)}{2}$
and $(l,m)\in A_{n,p+1}$,
or $k\leq p$ then 
$m-n+k+1\geq \frac{(n-p-2)(n-p-1)}{2}$
implies $m\geq \frac{(n-p-1)(n-p)}{2}$ and
$(l,m)\in A_{n,p}$.

Suppose now that
$l=\frac{p(p+1)}{2}, m=\frac{(n-p-1)(n-p)}{2}$. We wish to prove that there exists a unique Gog triangle with these numbers of inversions and coinversions. Consider $RX$, which has $l-k$
inversions. If $k>p$ then  $l-k<\frac{(p-1)p}{2}$, therefore, by Remark \ref{pp+1}, $RX$ has at least 
$\frac{(n-p)(n-p+1)}{2}$ coinversions, which contradicts the fact that $RX$ has at most $m=\frac{(n-p-1)(n-p)}{2}$ coinversions; it follows that $k\leq p$. A similar reasoning with $LX$ shows that in fact $k=p$, and $RX$ has $\frac{(p-1)p}{2}$ inversions, and at most  $m=\frac{(n-p-1)(n-p)}{2}$ coinversions,
hence $RX$, agin by Remark \ref{pp+1} it has exactly $m=\frac{(n-p-1)(n-p)}{2}$, so, by induction hypothesis, it is the unique  Gog triangle with $l=\frac{(p-1)p}{2}$
inversions, and  $\frac{(n-p-1)(n-p)}{2}$ coinversions. Comparing with the formula
(\ref{trdiamond}) for this triangle, we find that $X$ is again the unique triangle with $l=\frac{p(p+1)}{2}, m=\frac{(n-p-1)(n-p)}{2}$.
\qed

\end{document}